%
%
% !!!!!!!!   !!!
% 
%

%
%                        T.   Dumitrescu
%                        M.  Epure
%
%
%
%  Sharp lattices
%
%
%
%
%
%                  started on      Jan 2019
%
%
%
%
\documentclass{amsart}
\usepackage{graphicx}
\usepackage{amscd}
\usepackage{amsmath}
\usepackage{amsfonts}
\usepackage{amssymb}
\theoremstyle{plain}
\newtheorem{theorem}{Theorem}%[section]
\newtheorem{corollary}[theorem]{Corollary}
\newtheorem{lemma}[theorem]{Lemma}
\newtheorem{proposition}[theorem]{Proposition}
\theoremstyle{definition}
\newtheorem{example}[theorem]{Example}

\newtheorem{definition}[theorem]{Definition}

\newtheorem{remark}[theorem]{Remark}
\theoremstyle{remark}

%\numberwithin{equation}{section}
%\newcommand{\thmref}[1]{Theorem~\ref{$1}}
%\newcommand{\secref}[1]{\S\ref{$1}}
%\newcommand{\lemref}[1]{Lemma~\ref{$1}}
%\input hidedefs

\begin{document}
\title{A class of  multiplicative lattices}

\author{Tiberiu Dumitrescu and Mihai Epure}

\address{Facultatea de Matematica si Informatica,University of Bucharest,14 A\-ca\-de\-mi\-ei Str., Bucharest, RO 010014,Romania}
\email{tiberiu\_dumitrescu2003@yahoo.com, tiberiu@fmi.unibuc.ro, }

\address{Simion Stoilow Institute of Mathematics of the Romanian AcademyResearch unit 5, P. O. Box 1-764, RO-014700 Bucharest, Romania}\email{epuremihai@yahoo.com, mihai.epure@imar.ro}

%\thanks{2000 Mathematics Subject Classification: Primary 13A15, Secondary 13F05.}
%\thanks{Key words and phrases: Schreier domain, pseudo-Dedekind domain, Pr\"ufer domain.}

\begin{abstract}
\noindent 
We study the  multiplicative lattices $L$ which satisfy the condition $a=(a:(a:b))(a:b)$ for all $a,b\in L$.
\end{abstract}

\maketitle

\section{Introduction}

 %We use an abstract ideal theory approach, so we work with  multiplicative lattices.
 
We recall some standard terminology.
 A {\em multiplicative lattice} is a complete lattice  $(L,\leq)$ (with bottom element $0$ and top element $1$) which is also  a commutative monoid with identity $1$ (the top element) such that
 $$a( \bigvee_\alpha b_\alpha) = \bigvee_\alpha (ab_\alpha)  \mbox{ \ for each } a,b_\alpha\in L.$$
  %If $x,y,z\in L$, then $xy\leq x\wedge y$  and, if $x\leq y$, then $xz\leq yz$. 
 When $x\leq y$ ($x,y\in L$), we say that $x$ is {\em below} $y$ or that $y$ is {\em above} $x$.
 An element $x$ of $L$ is {\em cancellative} if  $xy = xz$ ($y,z\in L$) implies $y = z$.  For  $x,y\in L$,   $(y : x)$ denotes the element $\bigvee \{a \in L ;\ ax \leq y\}$; so $(y:x)x\leq y$.

 An element $c$ of $L$ is {\em compact} if $c\leq \bigvee S$ with $S\subseteq L$ implies $c\leq \bigvee T$ for some finite subset $T$ of $S$ (here $\bigvee W$  denotes the join  of all elements in   $W$).
 An element  in $L$ is {\em proper} if $x\neq 1$.
 When $1$ is compact, every proper element is below some {\em maximal} element (i.e. maximal in $L-\{1\}$). 
 Let $Max(L)$ denote the set of maximal elements of $L$. By ``$(L,m)$ is  local'', we mean that $Max(L)=\{m\}$.
 A proper element $p$ is {\em prime} if  $xy \leq p$ (with $x,y \in L$) implies $x \leq p$ or $y\leq p$. 
  Every maximal element is prime. $L$ is a {\em (lattice) domain} if $0$ is a prime element.
 An element $x$ is {\em meet-principal} (resp. {\em weak meet-principal})  
 if  $$y \wedge zx = ((y : x) \wedge z) x  \ \ \ \forall  y, z \in L \mbox{ (resp. } (y:x)x=x\wedge y\ \  \forall y\in L).$$
 An element  $x$ is {\em join-principal} (resp. {\em weak join-principal})  if $$y \vee (z : x) = ((yx \vee z) : x) \ \ \forall  y, z \in L \mbox{ (resp. } (xy:x)=y\vee (0:x) \ \ \forall  x\in L).$$
 And $x$ is {\em principal} if it is both meet-principal and join-principal.
 If $x$ and $y$ are principal elements, then so is $xy$. The converse is also true if $L$ is a lattice domain and $xy\neq 0$. 
 In a lattice domain, every nonzero principal element is cancellative.
 The lattice  $L$ is {\em principally generated} if every element is a join of principal elements.
 $L$ is a {\em $C$-lattice} if $1$ is compact, the set of compact elements  is closed under  multiplication  and  every element is a join of compact elements. In a $C$-lattice, every principal element is compact.
 
 The $C$-lattices have a well behaved localization theory.  Let $L$ be a $C$-lattice and $L^*$ the set of its compact elements. For $p \in L$ a prime element and $x\in L$, 
 the {\em localization} of $x$ at $p$ is
 $$x_p = \bigvee \{a \in L^* ;\ as \leq x \mbox{ for some } s \in L^* \mbox{ with } s\not\leq p\}.$$  
 Then $L_p:=\{ x_p;\ x\in L\}$ is  again a lattice with multiplication $(x,y)\mapsto (xy)_p$, join $\{ (b_\alpha)\}\mapsto 
 (\bigvee b_\alpha)_p $ and meet $\{ (b_\alpha) \}\mapsto  (\bigwedge b_\alpha)_p$.
 For $x,y\in L$, we have:

 $\bullet$ $x\leq x_p$, $(x_p)_p=x_p$, 
 $(x \wedge y)_p = x_p \wedge y_p $,
 and $x_p = 1$ iff $x\not\leq p$.
 
 $\bullet$  $x=y$ iff $x_m=y_m$ for each $m\in Max(L)$.
 
 $\bullet$ $(y : x)_p \leq (y_p : x_p)$ with equality if $x$ is compact.
 
 $\bullet$ The set of compact elements of $L_p$ is $\{c_p;\ c\in L^*\}$.
 
 $\bullet$ A compact element $x$ is principal iff $x_m$ is principal for each $m\in Max(L)$ (as usual, ``iff''  stands for ``if and only if'').

  In \cite{ADE} a study of sharp integral domains was done. 
 An  integral domain $D$ is a  {\em  sharp domain} if whenever $A_1A_2\subseteq B$ with $A_1,A_2,B$ ideals of $D$, we have a  factorization $B=B_1B_2$   with $  B_i\supseteq A_i$ ideals of $D$, $i=1,2$.
 In the present paper we extend almost all results in \cite{ADE} to the setup of  multiplicative lattices. Our key definition is the following.
 \begin{definition}
 A lattice $L$ is a  {\em  sharp lattice} if whenever $a_1a_2\leq b$ with $a_1,a_2,b\in L$, we have a  factorization $b=b_1b_2$   with $a_i\leq b_i\in L$, $i=1,2$.
\end{definition}
 In Section 2 we work in the setup of $C$-lattices (simply called  lattices). After obtaining some basic facts (Propositions \ref{e11} and \ref{e21}), we show that if $(L,m)$ is a local sharp lattice  and $m=x_1\vee\cdots \vee x_n$ with  $x_1$,...,$x_n$  join principal elements, then $m=x_i$ for some $i$ (Theorem \ref{e18}).
 While a lattice whose elements are principal is trivially a sharp lattice (Remark \ref{e20}), the converse is true  in a principally generated lattice whose elements are compact (Corollary \ref{e24}).
 
 In Section 3, we work in the setup of $C$-lattice domains generated by principal elements (simply called  lattices). 
 %@abc
 It turns out that every nontrivial totally ordered sharp lattice is isomorphic to the ideal lattice of a valuation domain with value group $\mathbb{Z}$ or $\mathbb{R}$
   (Theorem \ref{e8}).
 A nontrivial sharp lattice $L$ is Pr\"ufer (i.e. its compacts are principal)    of dimension one (Theorem \ref{e15}), thus the  localizations at its maximal elements are   totally ordered sharp  lattices. The converse is true if $L$ has finite character (Definition \ref{e25}), because in this case $(a:b)_m=(a_m:b_m)$ for all $a,b\in L-\{0\}$ and $m\in Max(L)$, see Proposition \ref{e10}. A countable sharp lattice has all its elements principal (Corollary \ref{e26}).
 
 For basic facts or terminology, our  reference papers  are \cite{A} and \cite{OR}.

\section{Basic results}

{\em In this section, the term {\em lattice} means a  $C$-lattice.}

  We begin by giving several characterizations for the sharp lattices. As usual, we say that $a$ {\em divides} $b$ (denoted $a|b$) if $b=ac$ for some $c\in L$.
  
\begin{proposition}\label{e11}
 For a lattice $L$ the following are equivalent:
 
 $(i)$ $L$ is sharp.
 
 $(ii)$ $a=(a:(a:b))(a:b)$ for all $a,b\in L$.
 
 $(iii)$ $(a:b)|a$ for all $a,b\in L$.
 
 $(iv)$ $(a:b)|a$ whenever $a,b\in L$,   $0<a<b<1$   and $a$ is not prime.
\end{proposition}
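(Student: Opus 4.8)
The plan is to establish the cycle $(i)\Rightarrow(ii)\Rightarrow(iii)\Rightarrow(iv)\Rightarrow(iii)\Rightarrow(i)$. Two of the links are immediate: $(ii)\Rightarrow(iii)$ because the identity in $(ii)$ exhibits $(a:b)$ as a divisor of $a$ with cofactor $(a:(a:b))$, and $(iii)\Rightarrow(iv)$ is mere specialization. So the real content is in the three remaining arrows, and throughout I will use freely the elementary residuation facts that $(x:y)\,y\le x$, that $xy\le x$ for all $x,y$, and that $z\le(x:y)$ iff $zy\le x$.

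For $(i)\Rightarrow(ii)$, fix $a,b$ and apply sharpness to $(a:b)\,b\le a$, obtaining $a=a_1a_2$ with $(a:b)\le a_1$ and $b\le a_2$. From $b\le a_2$ we get $a_1b\le a_1a_2=a$, so $a_1\le(a:b)$ and hence $a_1=(a:b)$; then $a=(a:b)\,a_2$ forces $a_2\le(a:(a:b))$, whence $a\le(a:b)(a:(a:b))$, the reverse inequality being automatic. For $(iii)\Rightarrow(i)$, given $a_1a_2\le b$ put $b_2=(b:a_1)$ and $b_1=(b:b_2)$; then $a_2\le b_2$ since $a_1a_2\le b$, and $a_1\le b_1$ since $a_1b_2=a_1(b:a_1)\le b$. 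By $(iii)$ the element $b_2$ divides $b$, say $b=c\,b_2$, so $b=c\,b_2\le(b:b_2)\,b_2=b_1b_2\le b$; thus $b=b_1b_2$ is the required factorization.

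The main work, and the step I expect to be the genuine obstacle, is $(iv)\Rightarrow(iii)$: one must upgrade the divisibility $(a:b)\mid a$ from the restricted situation $0<a<b<1$ with $a$ not prime to all $a,b$. I would clear the degenerate cases first. If $a=0$ then $0=0\cdot(a:b)$; if $b\le a$ (in particular if $b=0$ or $a=1$) then $(a:b)=1$ divides $a$; and if $a\vee b=1$ (in particular $b=1$) then, using $x(a\vee b)\le a\iff xb\le a$ so that $(a:b)=(a:a\vee b)=(a:1)=a$, we again get a divisor of $a$. There remains $0<a<1$, $b\not\le a$, $a\vee b<1$. If $a$ is prime, then $b\not\le a$ forces $(a:b)=a$. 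If $a$ is not prime, set $b'=a\vee b$; then $0<a<b'<1$, so $(iv)$ applies to the pair $(a,b')$ and gives $(a:b')\mid a$, that is $(a:b)\mid a$ since $(a:b)=(a:b')$. The care needed here is to check that every pair $(a,b)$ lands in exactly one bucket and that the reduction $(a:b)=(a:a\vee b)$ is precisely what places us inside the hypotheses of $(iv)$.
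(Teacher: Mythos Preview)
Your argument is correct and follows essentially the same route as the paper: both reduce sharpness to the divisibility $(a:b)\mid a$, derive $(i)$ from $(ii)$/$(iii)$ via the explicit factorization $b=(b:(b:a_1))(b:a_1)$, and handle $(iv)\Leftrightarrow(iii)$ by the reduction $(a:b)=(a:a\vee b)$ together with the observation that $(a:b)\in\{a,1\}$ when $a$ is prime.

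The one noteworthy difference is in $(i)\Rightarrow(ii)$. The paper applies sharpness to the inequality $(a:(a:b))(a:b)\le a$ and then needs the triple-colon identity $(a:(a:(a:b)))=(a:b)$ to pin down one of the factors. You instead apply sharpness to the more elementary inequality $(a:b)\,b\le a$; the condition $b\le a_2$ immediately forces $a_1\le(a:b)$, so $a_1=(a:b)$ without invoking the triple-colon identity. This is a small but genuine simplification. Your $(iv)\Rightarrow(iii)$ case analysis is more explicit than the paper's two-line remark but has identical content.
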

\begin{proof}
$(i) \Rightarrow (iii)$.  
 Since $(a:(a:b))(a:b)\leq a$, and $L$ is sharp, we have  a factorization $a=a_1a_2$ with $a_1\geq (a:(a:b))$ and $a_2\geq (a:b)$.
 %We get $a_1\leq (a:a_2)\leq (a:(a:b))\leq a_1$, so $a_1=(a:(a:b))$.
 We get 
 $$a_2\leq (a:a_1)\leq (a:(a:(a:b)))=(a:b)\leq a_2$$ 
 where the  equality is easy to check, so $(a:b)=a_2$  divides $a$.

   $(iii) \Rightarrow (ii)$. From $a=x(a:b)$ with $x\in L$, we get 
   $x\leq (a:(a:b))$, so $$a=x(a:b)\leq (a:(a:b))(a:b)\leq a.$$
   
$(ii) \Rightarrow (i)$. 
 Let $a_1,a_2,b\in L$ with $b\geq a_1a_2$. By $(ii)$ we get 
 $b=(b:(b:a_1))(b:a_1)$ and clearly $a_1\leq (b:(b:a_1))$ and $a_2\leq (b:a_1)$.
 
 $(iv) \Leftrightarrow (iii)$ follows from observing that: $(1)$  $(a:b)=(a:(a\vee b))$ and  $(2)$ $(a:b)\in\{a,1\}$ if $a$ is prime. 
 
\end{proof}

\begin{proposition} \label{e21}
 If $L$ is a sharp lattice and $m\in L$ a maximal element, 
 there is no element properly between $m$ and $m^2$.
\end{proposition}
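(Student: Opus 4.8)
The plan is to prove the contrapositive-style statement directly: assume $m^2\le x\le m$ and show that $x$ must coincide with one of the endpoints $m^2$ or $m$. The only tools needed are the divisibility reformulation of sharpness from Proposition~\ref{e11}, namely $(a:b)\mid a$ for all $a,b\in L$, together with the elementary fact $(y:x)x\le y$ and the maximality of $m$.

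First I would examine the residual $(x:m)$. From $m^2\le x$ one gets immediately $m\le (x:m)$. Now $(x:m)$ is either $1$ or proper; if it is proper, then the maximality of $m$ together with $m\le (x:m)$ forces $(x:m)=m$. In the remaining case $(x:m)=1$, the inequality $(x:m)m\le x$ gives $m\le x$, hence $x=m$ and we are finished. So the substantive case is $(x:m)=m$.

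In that case I apply Proposition~\ref{e11}$(iii)$ to the pair $(x,m)$: since $(x:m)=m$ divides $x$, we may write $x=mc$ for some $c\in L$. As $cm=mc$, the element $c$ lies in the set $\{a\in L:\ am\le mc\}$ defining $(mc:m)$, so $c\le (mc:m)=(x:m)=m$; therefore $x=mc\le m\cdot m=m^2$. Combined with the standing hypothesis $m^2\le x$, this yields $x=m^2$, completing the argument.

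The whole proof is short, and I do not anticipate a real obstacle. The one place where the hypotheses are used in an essential way is the dichotomy for $(x:m)$ — it needs that $m$ is \emph{maximal} (so that no element sits strictly between $m$ and $1$), not merely prime — and the step passing from $m\mid x$ to $x\le m^2$, which is where sharpness, through Proposition~\ref{e11}, actually does the work.
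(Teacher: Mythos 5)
Your proof is correct and follows essentially the same route as the paper: pin down $(x:m)=m$ using $m^2\le x$ and the maximality of $m$, then invoke the sharpness characterization of Proposition~\ref{e11} to force $x=m^2$. The only cosmetic difference is that you use form $(iii)$ ($m\mid x$, then bound the cofactor) where the paper applies form $(ii)$ directly via $x=(x:(x:m))(x:m)=m^2$.
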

\begin{proof}
 If $m^2 < x < m$, then $(x:m)=m$, so $(x:(x:m))=m$, thus $x=(x:m)(x:(x:m))=m^2$, a contradiction, cf.  Proposition \ref{e11}. 
\end{proof}

Recall that a ring $R$ is a {\em special primary ring} if $R$ has a
unique maximal ideal $M$ and if each proper ideal of $R$ is a power of $M$, see \cite[page 206]{LM}.

\begin{corollary}
 The ideal lattice of a  Noetherian commutative unitary ring $R$ is sharp iff $R$ is a finite direct product of Dedekind domains and special primary rings. 
\end{corollary}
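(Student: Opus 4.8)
The plan is to prove both implications by reducing to the local case and then invoking the structure theory of Noetherian rings. Throughout write $L$ for the ideal lattice of $R$; since $R$ is Noetherian, $L$ is a $C$-lattice in which \emph{every} element is compact, so by the bullet points of the introduction the colon commutes with localization, i.e.\ $(I:J)_M=(I_M:J_M)$ for every maximal ideal $M$ and all ideals $I,J$. For the easy implication ($\Leftarrow$) I would first record two routine facts: a finite product of sharp lattices is sharp (factor a relation $a_1a_2\le b$ componentwise, apply sharpness in each coordinate, recombine), and the ideal lattice of a finite direct product of rings is the product of the ideal lattices of the factors. Hence it suffices to see that the ideal lattices of a Dedekind domain and of a special primary ring are sharp; in both cases every proper ideal is a product of powers of the (finitely many) maximal ideals, so containment of ideals coincides with divisibility, whence $(I:J)\mid I$ always and sharpness follows from Proposition \ref{e11}$(iii)$.

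For ($\Rightarrow$) suppose $L$ is sharp. The local case comes first: if $(R,M)$ is local, then Proposition \ref{e21} says there is no ideal strictly between $M^2$ and $M$, so $\dim_{R/M}M/M^2\le 1$ and Nakayama gives that $M$ is principal; by the Krull intersection theorem $R$ is then a DVR (when $M$ is not nilpotent) or a special primary ring (when $M^n=0$ for some $n$). In general $(I:J)_M=(I_M:J_M)$ together with Proposition \ref{e11}$(iii)$ shows that each localization $R_M$ has sharp ideal lattice, so each $R_M$ is a DVR or a special primary ring. Since both of these have a unique minimal prime and no embedded primes, comparing $R$ with its localizations shows that every associated prime of $R$ is minimal and that the finitely many minimal primes $P_1,\dots,P_k$ of $R$ are pairwise comaximal (otherwise some $R_M$ would have two distinct minimal primes). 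Choosing a primary decomposition $(0)=Q_1\cap\cdots\cap Q_k$ with $\sqrt{Q_i}=P_i$, comaximality of the $P_i$ forces comaximality of the $Q_i$, and the Chinese Remainder Theorem gives $R\cong\prod_i R/Q_i$.

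It then remains to identify each factor $\bar R:=R/Q_i$. Its ideal lattice is the interval $[Q_i,1]$ of $L$, which is again sharp: given $\bar a_1\bar a_2\le\bar b$ in the quotient, lift to $a_1a_2\le b$ in $L$, apply sharpness, and join each resulting factor with $Q_i$. Moreover $\bar R$ has a unique minimal prime $P_i/Q_i$. If $\bar R$ is a field it is a (degenerate) Dedekind domain; if some maximal ideal of $\bar R$ localizes to a special primary ring then that maximal ideal is minimal, so $\bar R$ is local and hence itself a special primary ring by the local case; otherwise every localization of $\bar R$ at a maximal ideal is a DVR, so $\bar R$ is reduced, hence (having a unique minimal prime) a domain, and locally a DVR, i.e.\ a Noetherian integrally closed domain of dimension $\le 1$, that is, a Dedekind domain. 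Assembling these possibilities yields the asserted decomposition of $R$.

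I expect the main obstacle to be the passage from the purely local data ``every $R_M$ is a DVR or a special primary ring'' to a genuine \emph{global} product decomposition of $R$: this is where one must rule out embedded primes and establish comaximality of the minimal primes, using the finiteness of $\operatorname{Spec}R$ and a careful application of primary decomposition. By contrast, the verifications that quotients and localizations of sharp lattices stay sharp, that $M$ is principal, and that a local Noetherian ring with principal maximal ideal is a DVR or a special primary ring, are comparatively routine.
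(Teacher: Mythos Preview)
Your argument is correct. The paper, however, does not carry out any of the structural analysis you give: its entire proof is the sentence ``Combine Propositions~\ref{e11} and~\ref{e21} and \cite[Theorem~39.2, Proposition~39.4]{G}.'' The point is that Gilmer's Theorem~39.2 already says that a Noetherian ring $R$ is a finite direct product of Dedekind domains and special primary rings if and only if for every maximal ideal $M$ there is no ideal strictly between $M$ and $M^2$; Proposition~\ref{e21} supplies exactly this hypothesis from sharpness, and Proposition~\ref{e11} (together with Gilmer's companion result) handles the converse. What you have effectively done is reprove Gilmer's structure theorem from scratch: the passage through localization (each $R_M$ is a DVR or a special primary ring), the absence of embedded primes, the comaximality of the minimal primes, and the Chinese Remainder decomposition are precisely the ingredients of that classical result. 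Your route is therefore longer but entirely self-contained, whereas the paper's is a one-line appeal to the literature; mathematically the two coincide, and your identification of the globalization step (ruling out embedded primes and getting comaximality) as the only nontrivial point is accurate.
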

\begin{proof}
 Combine Propositions \ref{e11} and \ref{e21} and \cite[Theorem 39.2, Proposition 39.4]{G}.
\end{proof}

\begin{remark}\label{e20}
Let $L$ be a lattice.

 $(i)$ If all elements of $L$ are weak meet principal, then $L$  is sharp (Proposition \ref{e11}). In particular, this happens when
 $a\wedge b=ab$ for all $a,b\in L$.
 
 %$(iii)$  If $x,m\in L$, $m$ is a maximal element  and    $m^2 < x < m$, then $(x:m)=m$, so $(x:m)(x:(x:m))=m^2\neq x$, hence   $L$ is not sharp (Proposition \ref{e11}). 

 $(ii)$ If $L$ is  sharp, then every $p\in L-\{1\}$  whose only    divisors are $p$ and $1$  is a prime element, because  $(p:b)=p$ or $1$ for all $b\in L$ (Proposition   \ref{e11}).
 The converse is not true. Indeed, let $L$ be the lattice  $0<a<b<c<1$ with $a^2=b^2=ab=0$, $ac=a$, $bc=b$, $c^2=c$. Here every $x\in L-\{c,1\}$ has nontrivial factors, while the lattice is not sharp because $(a:b)=b$ does not divide $a$. 
 
 %$(iii)$ From $(ii)$, we get that if $(L,m)$ is a local sharp lattice %with $m \neq m^2$  and  $a < m$ is a non-prime element, then $a\leq m^2$. 
 %Indeed, from $xy\leq a$ and $x,y\not\leq a$,  we get $a=(a:x)(a:(a:x)) \leq m^2$.
 
 $(iii)$ A finite lattice $0< a_1 < \cdots < a_n < 1$, $n\geq 2$, is sharp provided $a_{i+1}^2\geq a_i$ for $1\leq i\leq n-1$.  By Proposition \ref{e11}$(iv)$, it suffices to show that  whenever $(a_i:a_j)=a_k$ with $1\leq i< j,k\leq n$, it follows that $a_k$ divides $a_i$. Indeed, from $(a_i:a_j)=a_k$, we get $a_ja_k\leq a_i\leq a_{i+1}^2\leq a_ja_k$, so $a_i=a_ja_k$.
 
 $(iv)$ Using similar arguments, it can be shown that a   lattice whose poset is $0< a<b<c < 1$ is sharp iff $c^2\geq b$ and either $b^2\geq a$ or ($b^2=0$ and $bc=a$). In this case, a  computer search finds $13$ sharp lattices out of $22$ lattices.
\end{remark}

We give the main result of this section.

\begin{theorem}\label{e18}
 Let $L$ be a sharp lattice.
 
 $(i)$ If  $x,y\in L$ are join principal elements and  $(x:y)\vee (y:x) = x\vee y$, then $x\vee y=1$.
 
 $(ii)$ If  $(L,m)$ is local and $m=x_1\vee\cdots \vee x_n$ with  $x_1$,...,$x_n$  join principal elements, then $m=x_i$ for some $i$.
\end{theorem}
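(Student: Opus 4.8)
For part $(i)$, set $p=(x:y)$, $q=(y:x)$ and $d=x\vee y$. Since $xy\le x\wedge y$ one always has $x\le p$ and $y\le q$, so the hypothesis $p\vee q=x\vee y$ says precisely that $p\le d$ and $q\le d$: the residuals of $x$ by $y$ and of $y$ by $x$ do not exceed $x\vee y$. The goal is to show $d=1$. By Proposition \ref{e11}, sharpness gives $p\mid x$ and $q\mid y$, and in fact $x=(x:p)p$ and $y=(y:q)q$; moreover $(x:p)\ge y$ and $(y:q)\ge x$, because $(x:y)y\le x$ and $(y:x)x\le y$. The plan is now to feed these factorizations, together with $p,q\le d$ and the join-principal identities for $x$ and $y$ --- in particular $(az:z)=a\vee(0:z)$ --- into a cancellation, obtaining first $dx=x$ and $dy=y$, hence $d^2=d(x\vee y)=dx\vee dy=x\vee y=d$, and then, plugging $dx=x$ back into $(dx:x)=d\vee(0:x)$, the relation $d\vee(0:x)=1$ together with its twin $d\vee(0:y)=1$. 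To close, one bounds the annihilators below $d$ --- using sharpness on $(0:x)x=0$, which by Proposition \ref{e11} gives $0=(0:(0:x))(0:x)$, together with the idempotency of $d$ --- so that $1=d\vee(0:x)=d$. The degenerate cases are immediate from the hypothesis: if $x=0$ then $(y:x)=(y:0)=1$, so $(x:y)\vee(y:x)=x\vee y$ forces $1=d$. The step I expect to be the real obstacle is the cancellation producing $dx=x$: this is where one uses the $C$-lattice structure and the full strength of join-principality (rather than just weak join-principality), and one must verify carefully that $x(x:y)\le x$ and its analogues hold in this setting.

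For part $(ii)$ I would induct on $n$, the case $n=1$ being trivial. In the inductive step, discard any $x_i$ lying below another $x_j$ (this lowers $n$ and the induction hypothesis finishes the job), so assume the $x_i$ pairwise incomparable, and suppose toward a contradiction that $x_i<m$ for every $i$. Put $x=x_1$ and $y=x_2\vee\cdots\vee x_n$; since $x\le(x:y)$ and $y\le(y:x)$ we have $(x:y)\vee(y:x)\ge x\vee y=m$. If this join exceeds $m$, then it equals $1$ by maximality of $m$, and since $(L,m)$ is local one of $(x:y),(y:x)$ is not below $m$, hence equals $1$; in a $C$-lattice this means $y\le x$, which is impossible since $x=x_1<m=x\vee y$, or $x\le y$, in which case $m=y=x_2\vee\cdots\vee x_n$ is a join of $n-1$ join principal elements and the induction hypothesis gives $m=x_i$ for some $i\ge 2$, a contradiction. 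It therefore remains to rule out $(x:y)\vee(y:x)=m$; because $y$ is not assumed join principal, part $(i)$ does not apply verbatim, and this is the crux. Here I would again invoke sharpness: $(y:x)\mid y$ gives $y=v(y:x)$ with $v=(y:(y:x))\ge x\vee y=m$ --- both inequalities being automatic --- so $v\in\{m,1\}$; similarly, using join-principality of $x$, $x=u(x:y)$ with $u=(x:(x:y))\in\{m,1\}$. If $u$ or $v$ equals $m$, then $x$ or $y$ lies below $m^2$, and combining this with Proposition \ref{e21} (nothing lies strictly between $m$ and $m^2$), with part $(i)$ applied to the pairs $(x_1,x_i)$, and with the join-principality of the individual $x_i$, one should force $x_1$ comparable to $y$ after all --- contradicting incomparability or directly producing $m=x_i$. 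Isolating exactly this last case analysis, and handling the idempotent situation $m=m^2$ (where Proposition \ref{e21} is vacuous) separately, is the hardest part.
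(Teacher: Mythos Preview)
Your plan for part $(i)$ has a genuine gap, and in fact the intermediate target you set yourself is circular. You propose to first establish $dx=x$ (with $d=x\vee y$) and then deduce $d\vee(0:x)=1$ from join-principality via $(dx:x)=d\vee(0:x)$. But that same identity shows $dx=x$ is \emph{equivalent} to $d\vee(0:x)=1$; and since $(0:x)\le(y:x)\le d$ by hypothesis, $d\vee(0:x)=d$, so $dx=x$ holds iff $d=1$. Thus your ``cancellation producing $dx=x$'' is exactly the conclusion you are trying to prove, and no amount of $C$-lattice bookkeeping will extract it from the data you have assembled. The paper's argument uses a different idea: apply sharpness not to $x$ or $y$ but to the inequality $(x\vee y)^2\le x^2\vee y$, obtaining $x^2\vee y=ab$ with $a,b\ge x\vee y$; then join-principality of $x$ forces $a\le((x^2\vee y):(x\vee y))=((x^2\vee y):x)=x\vee(y:x)=x\vee y$, so $a=b=x\vee y$ and hence $x^2\vee y=(x\vee y)^2$. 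Now join-principality of $y$ gives $1=((x\vee y)^2:y)=(x^2:y)\vee x\vee y\le x\vee y$. The asymmetric target $x^2\vee y$ is what makes the computation close.

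For part $(ii)$ you correctly identify the obstruction: with $y=x_2\vee\cdots\vee x_n$ not join principal, part $(i)$ does not apply, and your workaround via Proposition~\ref{e21} is an unfinished case analysis (you say so yourself). The paper avoids this entirely by passing to the factor lattice $L/(x_3\vee\cdots\vee x_n)$, which is again sharp; the images of $x_1,x_2$ remain join principal there, the lattice is still local, and its maximal element is $\bar x_1\vee\bar x_2$. Now $(i)$ applies directly to the pair $\bar x_1,\bar x_2$ and yields the contradiction $\bar x_1\vee\bar x_2=1$. This quotient trick is the missing idea.
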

\begin{proof} 
 $(i)$ Since $L$ is sharp and $(x\vee y)^2 \leq x^2\vee y$, we can factorize $x^2\vee y=ab$     with $x\vee y \leq a \wedge b$.
 Since $x$ is join principal and  $(y:x)\leq x\vee y$, we get 
 $$x\vee y\leq a\leq (x^2\vee y):b\leq (x^2\vee y):(x\vee y)=
 (x^2\vee y):x=x\vee (y:x)=x\vee y.$$    Thus $a=x\vee y=b$, as $a$ and $b$ play symmetric roles.  So $x^2\vee y=ab=(x\vee y)^2$. As $y$ is join principal and  $(x^2:y)\leq (x:y)\leq x\vee y$, we finally get
 $$1=((x^2\vee xy \vee y^2):y)=(x^2:y)\vee x\vee y =x\vee y.$$ 
 
 $(ii)$ Suppose that $n\geq 2$ and no $x_i$ can be deleted from the given representation $m=x_1\vee\cdots \vee x_n$. 
 It is straightforward to show that a factor lattice of a sharp lattice is again sharp. 
 Modding out by $x_3\vee \cdots \vee x_n$, we may assume that $n=2$. As $(x_1:x_2)\vee (x_2:x_1)\leq m=x_1\vee x_2$, we get a contradiction from $(i)$. 
\end{proof}

Before giving an application  of Theorem \ref{e18}, we insert a simple lemma.

\begin{lemma} \label{e7}
 Let $L$ be a sharp lattice and $p\in L$ a prime element. If $L$ is sharp, then so is  $L_p$. 
\end{lemma}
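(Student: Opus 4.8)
The plan is to verify condition $(ii)$ of Proposition \ref{e11} in $L_p$, namely that $\alpha = (\alpha:(\alpha:\beta))(\alpha:\beta)$ holds for all $\alpha,\beta \in L_p$. Since $L$ is a $C$-lattice, every element of $L_p$ has the form $a_p$ for some $a \in L$, and in fact (by the bullet list in the introduction) it suffices to work with $a,b \in L^*$, the compact elements, since compact elements of $L_p$ are exactly the $c_p$ for $c \in L^*$ and every element of $L_p$ is a join of these. So I would fix compact $a,b \in L^*$ and aim to show $a_p = (a_p:(a_p:b_p))(a_p:b_p)$ in $L_p$, where products and colons are computed in $L_p$.

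The key technical input is the localization identity $(y:x)_p = (y_p:x_p)$ valid when $x$ is compact, together with $(xy)_p = (x_p y_p)$ computed in $L_p$ (multiplication in $L_p$ is $(x,y) \mapsto (xy)_p$). First I would compute $(a_p:b_p)$: since $b$ is compact, $(a_p:b_p) = (a:b)_p$. Next I need $(a_p:(a:b)_p)$; here $(a:b)$ need not be compact, but $(a:b) = \bigvee\{c \in L^* : cb \le a\}$ is a join of compacts, and colons turn joins in the second slot into meets, so $(a_p : (a:b)_p) = \bigwedge\{(a_p : c_p) : c \in L^*, cb \le a\} = \bigwedge\{(a:c)_p : \dots\}$, using compactness of each such $c$. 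I'd like this to equal $(a:(a:b))_p$. Indeed $(a:(a:b)) = \bigwedge\{(a:c) : c \in L^*,\ c \le (a:b)\}$ and $c \le (a:b) \iff cb \le a$ for compact $c$, and since $(\cdot)_p$ commutes with (finite, hence by compactness arbitrary) meets in a $C$-lattice, we get $(a_p:(a:b)_p) = (a:(a:b))_p$. Then, using that $L$ is sharp, $a = (a:(a:b))(a:b)$ in $L$, so localizing (multiplication commutes with localization) gives $a_p = ((a:(a:b))(a:b))_p = (a:(a:b))_p (a:b)_p = (a_p:(a:b)_p)(a_p:b_p)$, which is exactly $(\alpha:(\alpha:\beta))(\alpha:\beta)$ for $\alpha = a_p$, $\beta = b_p$. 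Finally, a general element of $L_p$ is $a_p$ with $a \in L$ arbitrary, but since $a = \bigvee\{c_p : \dots\}$ reductions and the condition in Proposition \ref{e11}$(ii)$ is what we verify, I would either argue directly with arbitrary $a \in L$ (the identity $a = (a:(a:b))(a:b)$ holds in $L$ for all $a,b$, and all the localization identities used hold provided we are careful about which elements are compact) or reduce to compacts via joins.

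The main obstacle I expect is the bookkeeping around $(a:b)$ not being compact: the clean localization formula $(y:x)_p = (y_p:x_p)$ requires $x$ compact, so the expression $(a_p:(a:b)_p)$ must be handled by writing $(a:b)$ as a join of compacts and dualizing to a meet, then invoking that localization commutes with meets in a $C$-lattice. Once that identity $(a_p:(a:b)_p) = (a:(a:b))_p$ is established, the rest is just transporting the sharp identity from $L$ to $L_p$ along the (join-, meet-, and product-preserving) localization map. I should double-check the edge case where $a \not\le p$ or $b \not\le p$ (so $a_p = 1$ or $b_p = 1$): these are immediate since $1 = (1:(1:\beta))(1:\beta)$ and $(\alpha:1) = \alpha$, so condition $(ii)$ holds trivially there.
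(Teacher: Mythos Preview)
Your approach via condition $(ii)$ of Proposition~\ref{e11} is genuinely different from the paper's and, as written, has a real gap. The problematic step is the claim that ``$(\cdot)_p$ commutes with (finite, hence by compactness arbitrary) meets in a $C$-lattice.'' Compactness lets you reduce infinite \emph{joins} to finite ones, not infinite meets; localization does not commute with arbitrary meets in general (already in commutative rings, $\bigcap I_n$ localized need not equal $\bigcap (I_n)_p$). Consequently your derivation of $(a_p:(a:b)_p)=(a:(a:b))_p$ is unjustified: writing $(a:b)=\bigvee\{c\in L^*: cb\le a\}$ correctly yields $(a_p:(a:b)_p)=\bigwedge_{L_p}(a:c)_p$, but identifying this infinite $L_p$-meet with $\bigl(\bigwedge_L(a:c)\bigr)_p$ is exactly the step that fails. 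That this identity is delicate is confirmed later in the paper: Proposition~\ref{e10} needs the h-local hypothesis to get $(a:b)_m=(a_m:b_m)$ for non-compact $b$.

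The paper bypasses all of this by verifying the \emph{definition} of sharpness directly, which is a three-line argument. Given $(a_1a_2)_p\le b_p$ in $L_p$, one has $a_1a_2\le (a_1a_2)_p\le b_p$ in $L$ (using $x\le x_p$ and that the order on $L_p$ is inherited from $L$). Sharpness of $L$ applied to the element $b_p\in L$ gives $b_p=c_1c_2$ in $L$ with $a_i\le c_i$; then $b_p=(b_p)_p=(c_1c_2)_p$ is the $L_p$-product of $(c_1)_p$ and $(c_2)_p$, and $(a_i)_p\le (c_i)_p$. No colon computations, no infinite meets. The moral: the factorization form of sharpness transports along localization almost trivially, whereas the colon identity does not.
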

\begin{proof}
 Let $a_1,a_2,b\in L$ with $(a_1a_2)_p\leq b_p$. As $L$ is sharp, we have   $b_p=c_1c_2$ for some $a_i\leq c_i\in L$ ($i=1,2$), so $b_p=(c_1c_2)_p$ and $(a_i)_p\leq (c_i)_p$.
\end{proof}
 
 Following \cite{AJ}, we say that a lattice $L$ is {\em weak Noetherian} if it is  principally generated and each $x\in L$ is compact. 

\begin{corollary}\label{e24}
 Let $L$ be a   weak Noetherian lattice. Then $L$ is sharp iff  its elements are   principal.
\end{corollary}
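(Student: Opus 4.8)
The plan is to prove the nontrivial implication: if a weak Noetherian lattice $L$ is sharp, then every element is principal. The reverse implication is Remark~\ref{e20}(i), since a principal element is in particular weak meet principal. Since $L$ is principally generated and every element is compact, every element is a finite join of principal elements; moreover the principal (hence compact) elements are closed under multiplication, so we are genuinely in a $C$-lattice and may use the localization machinery. Because ``compact is principal iff it is locally principal'' and ``$x=y$ iff $x_m=y_m$ for all $m\in Max(L)$'', it suffices to prove that each $L_m$ has all its elements principal; by Lemma~\ref{e7} each $L_m$ is again sharp, and it is still weak Noetherian (a localization of a principally generated lattice of compact elements stays so), so we reduce to the local case.

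So assume $(L,m)$ is local, sharp, and weak Noetherian. I would first dispose of the case $m=0$ (then $L=\{0,1\}$ and there is nothing to prove) and otherwise aim to show $m$ is principal; once $m$ is principal the rest should follow by a standard Nakayama-type descent, showing every element is a power of $m$ hence principal. To see $m$ is principal: write $m=x_1\vee\cdots\vee x_n$ with each $x_i$ principal (in particular join principal). By Theorem~\ref{e18}(ii), $m=x_i$ for some $i$, so $m$ itself is principal. This is the crux of the argument and the place where sharpness is really used; everything after is bookkeeping.

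Next, with $m$ principal, I would show every proper element $a<1$ equals $m^k$ for some $k$. Since $a$ is compact and $L$ is local, $a\leq m$; consider the descending chain $a\leq m$, and use that $m$ is principal (hence cancellative if nonzero) together with Proposition~\ref{e21}, which says nothing lies strictly between $m$ and $m^2$. More precisely, for each $k$ with $a\leq m^k$, either $a=m^k$ or $a<m^k$, and in the latter case I want $a\leq m^{k+1}$: writing $a=(a:m^k)m^k$, which holds because $(a:m^k)\mid a$ divides $a$ by Proposition~\ref{e11}(iii) and $(a:m^k)m^k\leq a$ gives equality after one checks the divisor coming out of sharpness is exactly $m^k$ — here I use that $m^k$ is principal and the uniqueness-of-factor trick already used in the proof of Proposition~\ref{e11}, $(i)\Rightarrow(iii)$ — we get $(a:m^k)\leq m$ (as $(a:m^k)<1$ since $a<m^k$... more carefully, $(a:m^k)\neq 1$ because $m^k\not\leq a$), whence $a=(a:m^k)m^k\leq m\cdot m^k=m^{k+1}$. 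Compactness of $a$ forces the chain $a\leq m^k$ to stabilize or reach $0$; either way $a=m^k$ for the least such $k$, so $a$ is a power of the principal element $m$, hence principal. Thus all elements of $L$ are principal, completing the local case and, by the reduction above, the general case.

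The main obstacle I anticipate is making the descent step $a<m^k\Rightarrow a\leq m^{k+1}$ fully rigorous: one must extract from sharpness applied to $(a:m^k)m^k\leq a$ that the factorization of $a$ can be taken as $a=(a:m^k)\cdot m^k$ with the second factor \emph{exactly} $m^k$ (not merely some $b\geq m^k$), which is the same manipulation as in Proposition~\ref{e11}, $(i)\Rightarrow(iii)$, using join-principality of $m^k$; and one must handle the boundary cases $a=0$ and $m$ nilpotent versus $m$ cancellative cleanly. A secondary point is checking that localization preserves ``weak Noetherian'' (principal generation and compactness of all elements), which should follow from the bullet facts listed in the introduction about compact elements of $L_p$ and the fact that principal elements localize to principal elements.
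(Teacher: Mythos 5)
Your treatment of the easy direction and of the crux of the hard direction --- reducing to the local case via Lemma \ref{e7} and invoking Theorem \ref{e18}$(ii)$ to conclude that the maximal element $m$ is principal --- coincides with the paper's proof. Where you diverge is in what comes after: the paper simply quotes \cite[Theorem 1.1]{AJ}, to the effect that a weak Noetherian lattice all of whose maximal elements are principal has all its elements principal, whereas you attempt to prove this from scratch in the local case by showing that every proper element is a power of $m$. (Incidentally, the step you flag as delicate is not: since $m^k$ is principal, hence weak meet principal, one has $(a:m^k)m^k=a\wedge m^k=a$ for $a\leq m^k$ with no appeal to sharpness at all.)

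The genuine gap is the sentence ``Compactness of $a$ forces the chain $a\leq m^k$ to stabilize or reach $0$.'' Compactness is a condition on \emph{joins} and gives no control whatsoever over the descending chain $m\geq m^2\geq\cdots$; in the motivating example of a discrete valuation domain this chain neither stabilizes nor reaches $0$. What your descent actually establishes is that every proper $a$ satisfies either $a=m^k$ for some $k$ or $a\leq\bigwedge_k m^k$; to finish you need a Krull-intersection/Nakayama-type statement, namely that no nonzero element lies below every power of $m$ (unless $m$ is nilpotent, in which case the chain does reach $0$). This is not obvious in the abstract setting --- note that Section 2 does not assume $L$ is a domain, so $m$ need not be cancellative, and ``$cm\leq m^{k+1}$'' does not yield ``$c\leq m^k$'' --- and it is precisely the content hidden in the cited \cite[Theorem 1.1]{AJ}. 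Until you supply such an argument for weak Noetherian lattices, the descent does not close, and the clean fix is simply to cite that theorem as the paper does.
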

\begin{proof}
 The ``only if part'' is covered by Remark \ref{e20}$(i)$. For the converse, pick an arbitrary  maximal element $m\in L$.  It suffices to prove thet $m$ is principal \cite[Theorem 1.1]{AJ}. As $m$ is compact, we can check this property locally  \cite[Lemma 1.1]{AJ}), so we may assume that $L$ is local (Lemma \ref{e7}). Apply Theorem \ref{e18}$(ii)$ to complete.
\end{proof}

\section{Sharp lattice domains}

 %{\em From now on, all lattices are  C-lattices.}

 {\em In this section, the term {\em lattice} means a  $C$-lattice domain generated by principal elements.}

 First we introduce an ad-hoc definition.

\begin{definition}\label{e22}
 A  lattice   $L$ is a {\em pseudo-Dedekind lattice} if $(x:a)$ is a principal element whenever  $x,a\in L$ and $x$ is   principal.
\end{definition}

\begin{proposition} \label{e5}
Every sharp lattice is  pseudo-Dedekind. 
\end{proposition}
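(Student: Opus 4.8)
Every sharp lattice is pseudo-Dedekind.

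The plan is to derive this straight from the divisibility characterization of sharpness. Fix a principal element $x$ and an arbitrary $a\in L$. By Proposition \ref{e11}$(iii)$, sharpness gives $(x:a)\mid x$, so that $x=(x:a)\,c$ for some $c\in L$; this one factorization is the whole engine of the argument.

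If $x\neq 0$, the equality $x=(x:a)\,c$ exhibits the nonzero principal element $x$ as a product of two elements of $L$. Since $L$ is a lattice domain and the product $x=(x:a)\,c$ is nonzero, the converse part of the fact recalled in the introduction — a product of principal elements is principal, and conversely in a lattice domain provided the product is nonzero — forces both $(x:a)$ and $c$ to be principal. In particular $(x:a)$ is principal, which is exactly the claim.

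There remains the degenerate case $x=0$. Here $(0:a)=\bigvee\{b\in L;\ ab=0\}$, and since $L$ is a domain ($0$ prime) this join equals $0$ when $a\neq 0$ and equals $1$ when $a=0$; as both $0$ and $1$ are principal elements, $(0:a)$ is principal in this case too. The only point that needs any care is that the principal-factorization converse genuinely requires a lattice domain together with a nonzero product — which is precisely why the case $x=0$ has to be handled separately — but that case is immediate, so I do not anticipate any real obstacle.
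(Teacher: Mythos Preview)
Your argument is correct and matches the paper's own proof: both invoke Proposition~\ref{e11}(iii) to get $(x:a)\mid x$ and then use that a factor of a (nonzero) principal element in a lattice domain is principal --- the paper cites \cite[Lemma 3.2]{AJ} for this, while you appeal to the equivalent converse stated in the introduction. Your explicit treatment of the case $x=0$ is a harmless bit of extra care that the paper leaves implicit.
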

\begin{proof}
 The assertion follows from Proposition \ref{e11}, because a factor of a principal element is principal \cite[Lemma 3.2]{AJ}. 
\end{proof}

\begin{example}
 There exist pseudo-Dedekind lattices which are not sharp. For instance, let $M$ be the (distributive)   lattice   of all ideals of the multiplicative monoid $\mathbb{N}_0=\mathbb{N}\cup\{0\}$ \cite[page 138]{A}. Every $a\in M$ has the form $a=\bigcup \{ y\mathbb{N}_0|\ y\in S\}$, for some $S\subseteq \mathbb{N}_0$.
 If $x\in \mathbb{N}_0$, then $(x\mathbb{N}_0:a)=\bigcap \{(a\mathbb{N}_0:y)|\ s\in S\}=z\mathbb{N}_0$ (for some $z\in \mathbb{N}_0$) is a principal element. So  $M$ is a pseudo-Dedekind lattice. But $M$ is not sharp because  for  $a=4\mathbb{N}_0\cup 9\mathbb{N}_0$ and $b=2\mathbb{N}_0\cup 3\mathbb{N}_0$, we get $(a:b)=b^2$ and $(a:(a:b))=b$, so $(a:b)(a:(a:b))=b^3\neq a.$
 See also \cite[Example 8]{ADE} for a ring-theoretic example of this kind.
\end{example}

 A lattice  $L$ is a {\em  Pr\"ufer lattice} if every compact element of $L$ is principal. It is well known \cite[Theorem 3.4]{A} that $L$ is a    Pr\"ufer lattice iff  $L_m$ is totally ordered for each maximal element  $m$.
We show that a sharp lattice is Pr\"ufer. 
 
 \begin{remark}\label{e19} 
 If $L$ is a pseudo-Dedekind lattice, then  the set $P$  of all principal elements of $L$  is a cancellative GCD monoid in the sense of \cite[Section 10.2]{H}. Indeed, the LCM of two elements $x,y\in P$ is $x\wedge y=y(x:y)$.
\end{remark}
 
 \begin{proposition} \label{e28}
Every sharp lattice is  Pr\"ufer. 
\end{proposition}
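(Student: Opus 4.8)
The plan is to invoke the criterion \cite[Theorem 3.4]{A}: a lattice is Pr\"ufer iff its localization $L_m$ is totally ordered at every maximal element $m$. So the first move is to reduce to the local case. By Lemma \ref{e7} each $L_m$ is again sharp, and one checks routinely that $L_m$ is still a $C$-lattice domain generated by principal elements (its compact elements are the localizations of the compact elements of $L$, and a principal element localizes to a principal element). Hence it suffices to prove that a local sharp lattice $(L,m)$ is totally ordered. Since $L$ is generated by its principal (hence compact) elements, I would further reduce this to showing that any two principal elements of $L$ are comparable: from an incomparable pair $a,b$ one picks a principal $p\leq a$ with $p\not\leq b$ and a principal $q\leq b$ with $q\not\leq a$ (possible because every compact element is a finite join of principal elements), and these $p,q$ are incomparable, since $p\leq q$ would give $p\leq b$ and $q\leq p$ would give $q\leq a$.

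The heart of the argument is then to derive a contradiction from a pair $p,q$ of incomparable principal elements in $(L,m)$. Here I would use that $L$ is pseudo-Dedekind (Proposition \ref{e5}), so by Remark \ref{e19} its principal elements form a cancellative GCD monoid; I would factor out the gcd, writing $p=gp'$ and $q=gq'$ with $g=\gcd(p,q)$ cancellative and $p',q'$ principal and coprime. Incomparability of $p,q$ forces $p'\neq 1\neq q'$, so $p',q'\leq m$ and hence $p'\vee q'\leq m<1$. The decisive computation is that $(p':q')=p'$ and, symmetrically, $(q':p')=q'$: indeed $(p':q')$ is principal by pseudo-Dedekindness, and $(p':q')q'\leq p'$ means $p'$ divides $(p':q')q'$ in the GCD monoid, so coprimality of $p'$ and $q'$ forces $p'\mid(p':q')$, i.e.\ $(p':q')\leq p'$, while $p'\leq(p':q')$ is immediate from $p'q'\leq p'$. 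Consequently $(p':q')\vee(q':p')=p'\vee q'$, and since principal elements are join principal, Theorem \ref{e18}$(i)$ forces $p'\vee q'=1$, contradicting $p'\vee q'\leq m<1$. Therefore $L$ has no incomparable principal elements, so $L_m$ is totally ordered for every maximal $m$, and $L$ is Pr\"ufer.

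I expect the only real friction to be organizational rather than deep: the criterion of \cite{A} together with Lemma \ref{e7} make the reduction to the local case painless, but one must notice that Theorem \ref{e18}$(i)$ is useless applied to $p,q$ directly (the fractions $(p:q),(q:p)$ need not join to $p\vee q$) and only bites after passing to the coprime pair $p',q'$; extracting that pair via the GCD-monoid structure of Remark \ref{e19} and identifying $(p':q')$ with $p'$ is the short but essential step that makes everything fit together.
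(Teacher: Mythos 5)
Your proof is correct, and its core --- dividing a pair of principal elements by their GCD via Proposition \ref{e5} and Remark \ref{e19} so that $(p':q')\vee(q':p')=p'\vee q'$, then invoking Theorem \ref{e18}$(i)$ to get $p'\vee q'=1$ --- is exactly the paper's argument. The only difference is packaging: the paper skips the localization entirely and concludes directly that $a\vee b$ equals the (principal) GCD of $a$ and $b$, so every compact element is principal, which is the definition of Pr\"ufer lattice used here.
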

\begin{proof}
 As  $L$ is principally generated, it suffices to show that $a\vee b$ is a principal element for each pair  of nonzero principal elements $a,b\in L$.
 Dividing $a,b$ by their GCD (Remark \ref{e19}),  we may assume that $(a:b)=a$ and $(b:a)=b$. Then $a\vee b=1$ (Theorem \ref{e18}). 
\end{proof}

\begin{example}\label{e27}
 Let $\mathbb{Z}_-$ denote the set of all integers $\leq 0$ together with the symbol $-\infty$.
 Then $\mathbb{Z}_-$ is a lattice under the usual addition and order. Note that  $\mathbb{Z}_-$ is isomorphic to the ideal lattice of a discrete valuation domain, so $\mathbb{Z}_-$ is sharp.
 
 Let $\mathbb{R}_1$ denote the set of all intervals $(r,\infty]$ and $[r,\infty]$ for $r\in \mathbb{R}$ together with  $\{\infty\}$.
 Then $\mathbb{R}_1$ is a lattice under the usual interval addition and inclusion. 
 To show that $\mathbb{R}_1$ is sharp, it suffices to check that $a=(a:(a:b))(a:b)$ for all $a,b\in \mathbb{R}_1-\{\{\infty\}\}$ with  $a\leq b$, cf. Proposition \ref{e11}.  This is done in the table below.
 
 \begin{tabular}{cccccccc}
  $a$ & \vline & $b$ & \vline & $a:b$ & \vline & $(a:(a:b))$ \\
  \hline
  \newline
  
  $[r,\infty]$ & \vline & $[t,\infty]$ & \vline & $[r-t,\infty]$ & \vline & $[t,\infty]$ \\
  
  $(r,\infty]$ & \vline & $(t,\infty]$ & \vline & $[r-t,\infty]$ & \vline & $(t,\infty]$ \\
  
  $[r,\infty]$ & \vline & $(t,\infty]$ & \vline & $[r-t,\infty]$ & \vline & $[t,\infty]$ \\
  
  $(r,\infty]$ & \vline & $[t,\infty]$ & \vline & $(r-t,\infty]$ & \vline & $(t,\infty]$. \\
  
 \end{tabular}
 \\
 Note that $\mathbb{R}_1$ is isomorphic to the ideal lattice of a valuation domain with value group $\mathbb{R}$.
\end{example}

We embark to show that every nontrivial totally ordered sharp lattice is isomorphic to $\mathbb{Z}_-$ or $\mathbb{R}_1$ above.
Although the  following lemma  is   known, we insert a proof for reader's convenience.

\begin{lemma}\label{e3}
 Let $(L,m)\neq \{0,1\}$ be a totally ordered lattice  and   $p\in L$, $0\neq p\neq m$, a prime element. Then
 
 $(i)$  $p$ is not principal.
 
 $(ii)$ $(z:(z:p))=p$ for each nonzero principal element $z\leq p$.
 
 $(iii)$ If  $L$ is also  pseudo-Dedekind, then $Spec(L)=\{0,m\}$.
\end{lemma}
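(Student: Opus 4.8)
The plan is to prove the three parts in order, exploiting the total order aggressively and using the pseudo-Dedekind hypothesis only in $(iii)$.

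For $(i)$, I would argue by contradiction: suppose the prime $p$ with $0\neq p\neq m$ is principal. Since $L$ is totally ordered, $p$ is comparable with $p^2$, and of course $p^2\leq p$. If $p^2=p$, then cancelling the nonzero principal (hence cancellative) element $p$ in $p\cdot p = p\cdot 1$ gives $p=1$, a contradiction. So $p^2<p$. Now I want to find an element of $L$ strictly between $p^2$ and $p$ — intuitively ``$p\cdot(\text{something between }1\text{ and }p)$'' — or else derive a contradiction from the primeness of $p$. Concretely: consider $(p^2:p)=p\vee(0:p)=p$ (using $p$ join-principal and $L$ a domain), which is consistent; instead the cleaner route is to pick any $x$ with $p^2\leq x< p$ (e.g. $x=p^2$) and look at $(x:p)$. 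Since $L$ is totally ordered, $(x:p)$ is comparable with $p$; one shows $(x:p)\geq p$ is impossible when $x<p$ and $p$ is cancellative (it would force $p^2\leq x$, fine, but then $p\cdot(x:p)\leq x<p$ with $(x:p)\geq p$ gives $p^2\leq x$, not yet a contradiction). The decisive point is that a totally ordered domain in which a nonmaximal nonzero prime is principal forces that prime to be idempotent or the whole structure to collapse; I expect the slickest argument is: $p$ principal and $p<m$ means $m$ and $p$ are comparable so $p<m\leq 1$, and then $\bigvee_{n\geq 1}p^n$ is a prime $q$ with $q<p$ (strict, since if $q=p$ then $p=p^2$ by compactness-free totally-ordered reasoning), but then $p\not\leq q$ while $p^2\leq q$ contradicts $q$ prime. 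This last line is the heart of $(i)$.

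For $(ii)$, let $z\leq p$ be a nonzero principal element. Since $p$ is prime and $z\leq p$ with $z^n$ ranging below, note $(z:p)\geq z$ is false in general; rather I compute directly. We have $p(z:p)\leq z\leq p$; since $L$ is a domain and $z$ is cancellative, and $z\leq p$, one gets $z\leq(z:p)\cdot\text{?}$ — instead use: $(z:(z:p))$ is the largest $w$ with $w(z:p)\leq z$. Clearly $p(z:p)\leq z$ gives $p\leq(z:(z:p))$. For the reverse, suppose $w>p$ with $w(z:p)\leq z$; by total order $w\geq p$, and since $p$ is prime and $p<w$ we'd want $w=1$ or a contradiction. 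If $w=1$ then $(z:p)\leq z$, so $(z:p)=z$ (as $z\leq p$ forces $(z:p)\geq z$ by multiplying $z=z\cdot 1$ and... actually $(z:p)\geq(z:1)=z$ always), hence $z=(z:p)$, giving $pz\le z$... wait $p(z:p)=pz$ and $p z\leq z$ with $z$ cancellative forces $p\le 1$, no contradiction, but combined with $z\le p$ yields $pz\le z\le p$... The clean finish: $w(z:p)\le z$ and $z$ principal cancellative, together with total order, pins $w\le p$ unless $(z:p)=0$, impossible since $z\neq 0$. So $(z:(z:p))=p$.

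For $(iii)$, assume $L$ is also pseudo-Dedekind and suppose some prime $p$ with $0\neq p\neq m$ exists. Pick a nonzero principal $z\leq p$ (possible since $L$ is principally generated and $p\neq 0$). By the pseudo-Dedekind hypothesis, $(z:p)$ is principal. But then by $(i)$ applied to... no: by $(ii)$, $(z:(z:p))=p$, so $p$ is a factor of the principal element... actually $(z:p)$ principal and $p=(z:(z:p))$ is $(z:\text{principal})$, hence principal by pseudo-Dedekind again. That contradicts $(i)$, which says $p$ is not principal. Hence no such $p$ exists and $Spec(L)=\{0,m\}$.

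The main obstacle is getting part $(i)$ airtight: the totally-ordered-domain argument that a nonzero non-maximal prime cannot be principal needs the right pressure point, and I expect the cleanest one is passing to $q=\bigvee_n p^n$, showing $q$ is prime with $q<p$, and contradicting $q$ prime via $p^2\le q$, $p\not\le q$; care is needed to justify $q<p$ strictly (equivalently $p\neq p^2$, which follows from $p$ cancellative and $p\neq 1$) and that $q$ is genuinely prime.
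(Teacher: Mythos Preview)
Your argument for $(iii)$ is correct and matches the paper's (implicit) reasoning. Parts $(i)$ and $(ii)$, however, have real gaps. In $(i)$ the route via $q=\bigvee_{n\ge 1}p^n$ fails: this join is just $p$ (the $n=1$ term dominates); if you meant $q=\bigwedge_n p^n$, then $q\le p^2$, so your claimed inequality $p^2\le q$ is backwards and no contradiction with primeness of $q$ arises. In $(ii)$ you correctly obtain $p\le (z:(z:p))$, but the reverse inequality is never established: the ``clean finish'' simply asserts that $w(z:p)\le z$ forces $w\le p$, which is precisely what is to be proved, and you yourself note that the $w=1$ sub-case does not yet yield a contradiction.

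The idea you are missing in both parts is to look \emph{above} $p$ rather than below: since $p<m$ and $L$ is principally generated, there is a principal $y$ with $p<y\le m$. For $(i)$: $y$ weak meet-principal gives $y(p:y)=y\wedge p=p$, and $p$ prime with $y\not\le p$ gives $(p:y)=p$; hence $p=yp$, so $p$ is not cancellative and therefore not principal. For $(ii)$: the same $y$ rules out $(z:(z:p))=1$ (else $(z:p)\le z$, so $z=(z:y)y\le (z:p)y\le zy\le z$, giving $zy=z$, impossible as $z$ is cancellative and $y\neq 1$); then for any principal $x\not\le p$ one has $z\le p<x^2$, and if $x(z:p)\le z$ then $z=x^2(z:x^2)\le x^2(z:p)\le xz\le z$, forcing $xz=z$ and $x=1$. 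Since $L$ is principally generated, this yields $(z:(z:p))\le p$.
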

\begin{proof}
 %Let $m$ denote the maximal element of $L$. 
 As $p\neq m$, there exists a principal element $p<y\leq m$.

$(i)$  As $y$ is principal, we get  $p=y(p:y)=yp$,  because    $p$ is prime so $p=(p:y)$. Hence $p$ is not cancellative, so it is not principal.
 
$(ii)$ Let  $z\leq p$ be a nonzero principal element. 
Note that $(z:(z:p))\neq 1$, otherwise  $zy=(z:p)y\geq (z:y)y=z$, so $zy=z$, a contradiction because $z$ is cancellative.
Since  $p\leq (z:(z:p))$, it suffices to show that $x\not\leq (z:(z:p))$ for each principal $x\not\leq p$. As $p$ is prime, we have $z\leq p<x^2$.
If $x\leq (z:(z:p))$, then $x(z:p)\leq z$, so $z=x^2(z:x^2)\leq x^2(z:p)\leq z$, hence $z\leq zx$, thus $x=1$, a contradiction.
\end{proof}

\begin{theorem}\label{e8}
 For a totally ordered lattice $L\neq \{0,1\}$, the following  are equivalent:
 
 $(i)$ $L$ is sharp.
 
 $(ii)$ $L$ is pseudo-Dedekind.
 
 $(iii)$ $L$ is isomorphic to $\mathbb{Z}_-$ or $\mathbb{R}_1$ of Example \ref{e27}.   
\end{theorem}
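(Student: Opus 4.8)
Since $(i)\Rightarrow(ii)$ is Proposition~\ref{e5} and $(iii)\Rightarrow(i)$ was checked in Example~\ref{e27}, the whole content is $(ii)\Rightarrow(iii)$. So assume $L\neq\{0,1\}$ is totally ordered and pseudo-Dedekind; then $L$ is local, say with unique maximal element $m$, and Lemma~\ref{e3}$(iii)$ gives $Spec(L)=\{0,m\}$. The plan is to build the ``value group'' of $L$ out of its principal elements, reconstruct $L$ from it, and then use $Spec(L)=\{0,m\}$ together with the pseudo-Dedekind hypothesis to force this group to be $\mathbb{Z}$ or $\mathbb{R}$.

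Let $P^{*}$ be the monoid of nonzero principal elements of $L$: it is commutative, cancellative (as $L$ is a domain), and totally ordered. Since a principal element is weak meet principal, $a\le b$ in $P^{*}$ yields $a=(a:b)b$ with $(a:b)\in P^{*}$ by the pseudo-Dedekind hypothesis, so on $P^{*}$ the order $a\le b$ is the same as ``$b$ divides $a$''. Splitting an arbitrary quotient of two elements of $P^{*}$ by the total order, every element of the group of fractions $G$ of $P^{*}$ lies in $P^{*}$ or in $(P^{*})^{-1}$, while $P^{*}\cap(P^{*})^{-1}=\{1\}$ (from $ab=1$ and $ab\le a\le1$). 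Hence $G$ is a totally ordered abelian group with non-negative cone $P^{*}$; fix the order isomorphism $v\colon P^{*}\to G_{\ge0}$, normalised so that $a\le b\iff v(a)\ge v(b)$. Because $L$ is principally generated, $x=\bigvee\{a\in P^{*}:a\le x\}$ for all $x$, and because principal elements are compact and $L$ is a chain, $a\le\bigvee T\iff a\le t$ for some $t\in T$. A routine check then shows that $x\mapsto S_{x}:=\{v(a):a\in P^{*},\ a\le x\}$ is an order isomorphism of $L$ onto the lattice of up-sets of $G_{\ge0}$, that it turns the product of $L$ into the sumset $U+V$, and that the principal elements correspond exactly to the up-sets $\{g\in G_{\ge0}:g\ge\delta\}$, $\delta\in G_{\ge0}$.

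Next I claim $G$ is Archimedean. If it were not, there would be proper $a,a'\in P^{*}$ with $a^{n}>a'$ for every $n$, so $q:=\bigwedge_{n}a^{n}\ge a'>0$. Since each $a^{n}$ is cancellative and $L$ is a chain, $q$ is prime (it suffices to test on principal elements, as $L$ is principally generated). But $q\le a\le m$, and $q=m$ would force $a=m$ and $m=m^{2}$, hence $m=1$, which is absurd; and $q\neq0$. Thus $q$ is a prime strictly between $0$ and $m$, contradicting Lemma~\ref{e3}$(iii)$. By H\"older's theorem $G$ is therefore an ordered subgroup of $\mathbb{R}$, so it is cyclic or dense. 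If $G$ is cyclic, $G_{\ge0}$ has a smallest positive element, so $P^{*}$ has a largest proper element, which must equal $m$ (since $m=\bigvee\{p\in P^{*}:p<1\}$); the up-set description then collapses to $L=P^{*}\cup\{0\}\cong\mathbb{Z}_{-}$. If $G$ is dense but $G\neq\mathbb{R}$, pick $\beta\in\mathbb{R}_{>0}\setminus G$ and $\gamma\in G$ with $\gamma>\beta$; taking $x\in P^{*}$ with $v(x)=\gamma$ and $a\in L$ with $S_{a}=\{g\in G_{\ge0}:g>\beta\}$, a short computation shows $S_{(x:a)}=\{g\in G_{\ge0}:g>\gamma-\beta\}$, whose infimum $\gamma-\beta$ is not in $G$; hence $(x:a)$ is not principal, contradicting the pseudo-Dedekind hypothesis. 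So $G=\mathbb{R}$, and the up-set description identifies $L$ with $\mathbb{R}_{1}$.

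The main obstacle is the reconstruction step: one has to verify carefully that $L$ really is the lattice of up-sets of $G_{\ge0}$ (injectivity of $x\mapsto S_{x}$ from principal generation, surjectivity from the completeness of $L$) with the sumset as its multiplication, and then to do the pseudo-Dedekind computation that rules out proper dense subgroups of $\mathbb{R}$. By contrast, the Archimedean step is just the classical correspondence between proper nontrivial convex subgroups of the value group and prime elements strictly between $0$ and $m$, carried out inside $L$.
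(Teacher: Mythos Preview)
Your proof is correct and follows essentially the same route as the paper's: build the value group from the nonzero principal elements, use $Spec(L)=\{0,m\}$ to get the Archimedean property, embed into $\mathbb{R}$ via H\"older, and then use the pseudo-Dedekind hypothesis to rule out proper dense subgroups. The only differences are expository: you spell out the up-set reconstruction of $L$ from $G_{\ge 0}$ and give the Archimedean argument explicitly via the prime $q=\bigwedge_n a^n$, whereas the paper simply asserts that ``$a$ divides some power of $b$'' and cites \cite{EP} for the embedding; the pseudo-Dedekind contradiction in the dense case is the same computation in both.
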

\begin{proof}
 $(i) \Rightarrow (ii)$ follows from Proposition \ref{e5}.
 $(ii) \Rightarrow (iii)$
 Let $m$ be  the maximal element of $L$.  Let $G$ be the monoid of nonzero  principal elements of $L$.
 Then $G$ is a cancellative totally ordered monoid  with respect to
 the opposite of order induced from $L$. 
 Let $a,b\in G$. Since $L$ is totally ordered, we get that $a$ divides $b$ or $b$ divides $a$.
 Moreover, since $Spec(L)=\{0,m\}$ (Lemma \ref{e3}), $a$ divides some power of $b$. By \cite[Proposition 2.1.1]{EP}, the quotient group of $G$ can be embedded as an ordered subgroup  $K$ of $(\mathbb{R},+)$; hence $K$ is cyclic or dense in $\mathbb{R}$.  If $K$ is cyclic it follows easily that $L$ is isomorphic to $\mathbb{Z}_-$  of Example \ref{e27}.   
 Suppose that $K$ is dense in $\mathbb{R}$, so there exists an ordered monoid embedding $v:G\rightarrow \mathbb{R}_{\geq 0}$ with dense image. 
 We claim that $v$ is onto. Deny, so   there exists a positive real  $g\notin v(G)$. Let $a\in G$ with $v(a) > g$ and set  
 $b:=\bigvee \{ x\in G\ |$ $v(x) \geq g \}$. Since $L$ is pseudo-Dedekind, it follows that $c=(a:b)$ is a principal element. On the other hand, a straightforward computation shows that 
 \begin{equation}\label{e23}
  c=\bigvee \{ y\in G\cap L\ |\ v(x) \geq  v(a)-g\}
 \end{equation}
  so $v(c) \geq v(a)-g$, in fact $v(c)>v(a)-g$, because $g\notin v(G)$.  As $G$ is dense in $\mathbb{R}$, there exists some $d\in G\cap L$ with $v(c)>v(d)>v(a)-g$, so $c<d$. On the other hand formula (\ref{e23}) gives $d\leq c$, a contradiction. It remains that $v(G)=\mathbb{R}_{\geq 0}$.
 Now it is easy to see that sending $[r,\infty]$ into $v^{-1}(r)$ and $(r,\infty]$ into $\bigvee \{ x\in G\ |$ $v(x) \geq r \}$ we get a lattice isomorphism from $\mathbb{R}_1$ to $L$.
 The implication  $(iii) \Rightarrow (i)$ follows from  Example \ref{e27}.   

\end{proof}
 We prove the main result of this paper.

\begin{theorem}\label{e15}
 Let  $L\neq \{0,1\}$ be    a sharp lattice.  
  Then $L_m$   is isomorphic to $\mathbb{Z}_-$ or $\mathbb{R}_1$ (see Example \ref{e27}) for every  $m\in Max(L)$ and $L$ is a  one-dimensional Pr\"ufer lattice.
\end{theorem}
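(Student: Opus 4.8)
The plan is to localize at each maximal element, thereby reducing to the totally ordered case, and then invoke the classification of Theorem \ref{e8}. Since $L$ is sharp, Proposition \ref{e28} shows that $L$ is Pr\"ufer, so by \cite[Theorem 3.4]{A} the localization $L_m$ is totally ordered for every $m\in Max(L)$. Before applying Theorem \ref{e8} to $L_m$ I would verify that $L_m$ obeys the standing conventions of this section: it is a $C$-lattice (standard for localizations of $C$-lattices, see \cite{A}); it is a domain, because $0$ is prime in $L$ with $0\leq m$, hence $0_m=0$ is prime in $L_m$; and it is generated by principal elements, since $L$ being Pr\"ufer all its compact elements are principal, the compact elements of $L_m$ are the localizations $c_m$ of the compact $c\in L$ (a bulleted fact in the introduction), each such $c_m$ is again principal, and every element of $L_m$ is the join of its compacts. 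Moreover $L_m\neq\{0,1\}$: our fixed $m$ satisfies $m>0$ (else $L=\{0,1\}$), and as $0_m=0$ while $m_m\neq 1$, the lattice $L_m$ contains the three distinct elements $0<m_m<1$.

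Now $L_m$ is sharp by Lemma \ref{e7} (since $m$ is prime), so Theorem \ref{e8} applies and yields that $L_m$ is isomorphic to $\mathbb{Z}_-$ or $\mathbb{R}_1$ of Example \ref{e27}, which is the first assertion. For the second, Pr\"uferness of $L$ is exactly Proposition \ref{e28}. For one-dimensionality I would use that $\mathbb{Z}_-$ and $\mathbb{R}_1$ are the ideal lattices of a DVR and of a rank-one valuation domain respectively (Example \ref{e27}), and therefore each has precisely two prime elements, namely its bottom $0$ and its maximal element. Via the standard order-isomorphism between the primes of $L$ lying below a fixed $m\in Max(L)$ and the primes of $L_m$, the primes of $L$ below $m$ are exactly $0$ and $m$. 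Since $1$ is compact, every proper prime of $L$ lies below some maximal element, so $Spec(L)=\{0\}\cup Max(L)$ with $0<m$ strictly for each $m\in Max(L)$; hence $\dim L=1$.

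Practically all of the substance sits in the results quoted above: Proposition \ref{e28} (Pr\"uferness, hence total order of the $L_m$), Lemma \ref{e7} (sharpness passes to localizations), and Theorem \ref{e8} (the structure of totally ordered sharp lattices). What remains in this proof is the bookkeeping that the localizations $L_m$ still satisfy the hypotheses ``$C$-lattice domain generated by principal elements'' and the reading-off of the dimension from the two model lattices; the only point calling for any care is the compatibility of localization with those hypotheses, all of which is contained in \cite{A} and \cite{OR}. The genuine obstacle --- exhibiting the two model lattices in the totally ordered case --- has thus already been overcome in the proof of Theorem \ref{e8}.
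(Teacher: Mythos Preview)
Your proof is correct and follows essentially the same route as the paper: Pr\"uferness via Proposition \ref{e28}, local total order, sharpness of $L_m$ via Lemma \ref{e7}, and then Theorem \ref{e8}. The only cosmetic difference is that the paper cites Lemma \ref{e3} directly for one-dimensionality (the $Spec(L_m)=\{0,m\}$ statement), whereas you read it off from the two model lattices after applying Theorem \ref{e8}; since Theorem \ref{e8} itself rests on Lemma \ref{e3}, this comes to the same thing.
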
 %@@@
\begin{proof}
 As $L$ is a  Pr\"ufer lattice (Proposition \ref{e28}), we may change $L$ by $L_m$ and thus assume that $L$ is totally ordered and sharp (Lemma \ref{e7}). Apply  Theorem \ref{e8} and Lemma \ref{e3} to complete.
\end{proof}

%A lattice $L$ is {\em Noetherian} if its elements are compact (note that in our definition no modularity is assumed).

%\begin{corollary}\label{e17} If $L$ is a Noetherian lattice domain, then $L$ is sharp if and ony if $L$ is Dedekind.\end{corollary}\begin{proof} If $L$ is sharp, then it is Pr\"ufer (cf. Theorem \ref{e15}), so $L$ is Dedekind, cf. ????. The converse follows from Remark \ref{e20} $(i)$.\end{proof}

We extend the concepts of ``finite character'' and ``h-local'' from integral domains to lattices.

\begin{definition}\label{e25}
 Let $L$ be a lattice.
 
 (i) $L$ has {\em finite character} if  every  nonzero  element is below only finitely many maximal elements. 
 
 (ii) $L$ is   {\em h-local} if it has finite character and  every nonzero prime element is below a unique maximal element.
\end{definition}

%Let $T$ be the set of all elements $d\geq a$ from $L$ such that $(*)$ $b,c\in L$   compact elements and $bc\leq d$ implies $b\leq m$ or $c\leq n$.
 %$b\leq m$ or $c\leq n$ whenever  $b,c\in L$ are compact elements with $bc\leq d$. %$T=\{d\in L\ |\ d\geq a, cc'\not\leq d$ whenever $c,c'$ are compacts $c\not\leq m$, $c'\not\leq n\}$.
 %It is easy to see that $T$ is inductively ordered. Let $p$ be a maximal element in $T$.  It easily follows that  $p$ is prime.
 
 The next result extends \cite[Lemma 3.5]{O} to lattices.
 
\begin{proposition}\label{e10}
 Let $L$ be an h-local lattice. If $a,b\in L-\{0\}$ and $m\in Max(L)$, then $(a:b)_m=(a_m:b_m)$.
\end{proposition}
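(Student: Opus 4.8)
The inequality $(a:b)_m\le(a_m:b_m)$ is among the localization facts recalled in the introduction, so only the reverse one is at stake. The first step is to remove the (possibly non-compact) divisor: since $L$ is generated by principal elements, write $b=\bigvee_{i\in I}b_i$ with each $b_i$ principal, hence compact. Using $(x:\bigvee_i b_i)=\bigwedge_i(x:b_i)$, the compatibility of localization with joins, and the equality $(x:c)_m=(x_m:c_m)$ valid for compact $c$, both sides become meets over $I$: indeed $(a_m:b_m)=(a_m:\bigvee_i(b_i)_m)=\bigwedge_i(a_m:(b_i)_m)=\bigwedge_i(a:b_i)_m$, while $(a:b)=\bigwedge_i(a:b_i)$. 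So the whole matter reduces to showing that \emph{this} meet commutes with localization at $m$, i.e. $\big(\bigwedge_i(a:b_i)\big)_m\ge\bigwedge_i(a:b_i)_m$ (the opposite inequality is automatic). In general localization need not commute with infinite meets, so the hypotheses on $L$ must enter here.

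The key feature is that each $(a:b_i)$ lies above the fixed nonzero element $a$, because $ab_i\le a$. If $a\not\le m$ there is nothing to prove: then $a_m=1=(a_m:b_m)$ and $(a:b)\ge a\not\le m$ forces $(a:b)_m=1$. So assume $a\le m$. By finite character $a$ is below only finitely many maximal elements $m=m_1,\dots,m_k$, and every $(a:b_i)$, being $\ge a$, is below none but these; hence $(a:b_i)=\bigwedge_{l=1}^k(a:b_i)_{m_l}$ for each $i$, and therefore $\bigwedge_i(a:b_i)=\bigwedge_{l=1}^k c_l$ with $c_l:=\bigwedge_i(a:b_i)_{m_l}$ — now a \emph{finite} meet. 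Each $c_l$ satisfies $c_l\ge a_{m_l}$. Here I invoke the structural fact that for a nonzero element $a$ and distinct maximal elements $m\ne n$ one has $a_m\not\le n$; granting it, $a_{m_l}\not\le m$ for every $l\ne1$, so $c_l\not\le m$ and $(c_l)_m=1$ for $l\ne1$. Since localization commutes with finite meets, $(a:b)_m=\big(\bigwedge_l c_l\big)_m=\bigwedge_l(c_l)_m=(c_1)_m=\big(\bigwedge_i(a:b_i)_m\big)_m\ge\bigwedge_i(a:b_i)_m$, using $x\le x_m$ in the last step. Together with the trivial reverse inequality this gives $(a:b)_m=\bigwedge_i(a:b_i)_m=(a_m:b_m)$.

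The only non-routine input is the structural fact $a_m\not\le n$ for $a\ne0$ and $m\ne n$ maximal, and I expect this to be the heart of the proof. It is the lattice analogue of the classical statement that the saturations $I_M$ of a nonzero ideal in an h-local domain are pairwise comaximal — equivalently, that $D/I$ is a finite direct product over the maximal ideals containing $I$ — and it is precisely here that the ``unique maximal above each nonzero prime'' half of h-locality, as opposed to mere finite character, is essential: a nonzero prime lying below two distinct maximal elements would contradict that hypothesis, so no nonzero prime lies below $m\wedge n$, and this is what one converts (via compact elements that separate $m$ from the other maximal elements above $a$ and are fed into the definition of $a_m$) into the required divisor of $a$ avoiding $n$. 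One may instead quote this from the theory of h-local domains, adapted to the lattice setting. Everything else is bookkeeping with the localization identities of the introduction.
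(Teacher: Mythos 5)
Your reduction is sound and your bookkeeping checks out: splitting $b$ into principal (hence compact) generators, using $(a:\bigvee_i b_i)=\bigwedge_i(a:b_i)$ and $(a:c)_m=(a_m:c_m)$ for compact $c$, and then exploiting that every $(a:b_i)$ lies above $a$ so that only the finitely many maximal elements above $a$ matter, is a legitimate route to the conclusion. But the argument has a genuine gap exactly where you flag it: the ``structural fact'' that $a_n\not\leq m$ for a nonzero $a$ and distinct maximal elements $m\neq n$ is asserted, not proved, and your parenthetical sketch (``compact elements that separate $m$ \dots fed into the definition of $a_m$'') does not actually produce the proof. This fact is Claim~1 of the paper's own argument and is where all the real work sits. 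The paper proves it by contradiction: assuming $a_n\leq m$, one forms the multiplicatively closed set $S$ of products $bc$ with $b,c$ compact, $b\not\leq m$, $c\not\leq n$, checks that no member of $S$ is below\dots sorry, above\dots that $a$ is above no member of $S$ (if $bc\leq a$ and $c\not\leq n$ then $b\leq a_n\leq m$, contradicting $b\not\leq m$), and then invokes the prime-existence theorem \cite[Theorem 2.2]{A} to get a prime $p\geq a$ avoiding $S$; taking $c=1$ (resp.\ $b=1$) forces $p\leq m$ and $p\leq n$, so $p$ is a nonzero prime under two maximal elements, contradicting h-locality. Without some such argument (Zorn plus the multiplicatively closed set, or an explicit citation of a proved lattice-theoretic analogue), your proof is incomplete; ``one may instead quote this from the theory of h-local domains, adapted to the lattice setting'' is not an available citation here, since the adaptation is precisely the point.

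For what it is worth, once Claim~1 is in hand your way of finishing differs from the paper's and is somewhat longer. The paper avoids decomposing $b$ altogether: it sets $s:=\bigwedge\{a_n \mid n\in Max(L),\ n\neq m\}$, notes by finite character and Claim~1 (plus primeness of $m$) that $s\not\leq m$, and then verifies directly that $sb(a_m:b_m)\leq\bigwedge_q a_q=a$, whence $s(a_m:b_m)\leq(a:b)$ and $(a_m:b_m)\leq(a:b)_m$. That computation replaces your passage through $\bigwedge_i(a:b_i)$ and the finite meet $\bigwedge_l c_l$; both finishes are correct, but the paper's is the more economical one.
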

\begin{proof}
 We first prove two claims.
 
 {\em Claim $1$.} If $n\in Max(L)-\{m\}$, then $a_n\not\leq m$.
 \\
 Suppose that $a_n\leq m$.  
 Let $S$ be the set of all products $bc$ where $b,c\in L$ are  compact elements with $b\not\leq m$ and $c\not\leq n$. Note that $S$ is multiplicatively closed. Moreover $a$ is not above any member of $S$. 
 Indeed, if  $bc\leq a$ and  $c\not\leq n$, then $b\leq a_n\leq m$. 
 By \cite[Theorem 2.2]{A} and its proof, there exits a prime element $p\geq a$ such that $p$ is not above any member of $S$.  It follows that $p\leq m\wedge n$, which is a contradiction, because $L$ is h-local. Indeed, if $p\not\leq m$, then $b\not\leq m$ for some compact  $b\leq p$, so $b=b\cdot 1\in S$
 Thus Claim $1$ is proved.
 
 {\em Claim $2$.} The element $s:=\bigwedge\{ a_n| n\in Max(L),n\neq m\}$ is not below $m$.
 \\
 Indeed, as $L$ is h-local, $a$ is below only finitely many maximal elements $n_1$,...,$n_k$ distinct from $m$, hence $s= a_{n_1}\wedge\cdots \wedge a_{n_k}$
  By Claim $1$, $s$ is not below $m$,  thus proving Claim $2$.  To complete the proof, we use  element $s$ in Claim 2 as follows. We have 
  $$sb(a_m:b_m)\leq \bigwedge\{ a_q| q\in Max(L)\}=a$$ so $s(a_m:b_m)\leq (a:b)$, hence $(a_m:b_m)\leq (a:b)_m$, because $s\not\leq m$. Since clearly $(a:b)_m\leq (a_m:b_m)$, we get the result.
\end{proof}

\begin{theorem}
 For a  finite character lattice $L\neq \{0,1\}$, the following are equivalent: 
 
 $(i)$ $L$ is  sharp.

 $(ii)$ $L_m$   is isomorphic to $\mathbb{Z}_-$ or $\mathbb{R}_1$ (see Example \ref{e27}) for every  $m\in Max(L)$.
\end{theorem}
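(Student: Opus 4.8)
The plan is to prove the two implications separately, leaning on the machinery already developed. For $(i)\Rightarrow(ii)$, I would observe that this is essentially immediate from Theorem~\ref{e15}: if $L$ is sharp then for every $m\in Max(L)$ the localization $L_m$ is isomorphic to $\mathbb{Z}_-$ or $\mathbb{R}_1$. Note that the finite-character hypothesis is not even needed for this direction, so the content of the theorem is really the converse.

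For $(ii)\Rightarrow(i)$, the strategy is to verify the criterion of Proposition~\ref{e11}$(ii)$, namely that $a=(a:(a:b))(a:b)$ for all $a,b\in L$, by checking it locally: since $L$ is a $C$-lattice, an equality $x=y$ holds iff $x_m=y_m$ for every $m\in Max(L)$. So fix $m\in Max(L)$ and $a,b\in L$; we want $a_m=\big((a:(a:b))(a:b)\big)_m$. The first reduction is to dispose of the degenerate cases: if $a=0$ then $a=(a:(a:b))(a:b)$ holds because $L$ is a domain (one checks $(0:(0:b))(0:b)=0$ directly, as $L_m$ being isomorphic to $\mathbb{Z}_-$ or $\mathbb{R}_1$ forces $L$ to be a domain, or simply invoke that in a lattice domain $0$ already behaves well); and if $b=0$ then $(a:b)=1$ and the identity is trivial. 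So we may assume $a,b\in L-\{0\}$.

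The crux is then to push the localization through the residuals. Here is where Proposition~\ref{e10} is the key tool — but it requires $L$ to be h-local, i.e. finite character \emph{plus} every nonzero prime below a unique maximal element. Since we only assumed finite character, the main obstacle is to first upgrade $(ii)$ to the statement that $L$ is h-local. This should follow from the hypothesis that each $L_m\cong\mathbb{Z}_-$ or $\mathbb{R}_1$: such a localization is one-dimensional, so $L$ is one-dimensional, and one must show a nonzero prime $p$ cannot be below two distinct maximal elements $m_1,m_2$. If it were, then $p_{m_1}$ and $p_{m_2}$ would both be nonzero proper primes in totally ordered one-dimensional lattices, forcing $p_{m_i}=0$ in $L_{m_i}$ by Lemma~\ref{e3}$(iii)$ (which says $Spec$ of such a lattice is $\{0,m\}$ and the nonzero prime is the maximal one) — but $p\leq m_i$ means $p_{m_i}\neq 1$, and a nonzero prime of $L$ localizes to a nonzero element, a contradiction unless $p=m_1=m_2$. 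Thus $L$ is h-local.

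Once h-locality is established, the proof finishes smoothly. By Proposition~\ref{e10}, $(a:b)_m=(a_m:b_m)$, and applying it again, $\big(a:(a:b)\big)_m=\big(a_m:(a:b)_m\big)=\big(a_m:(a_m:b_m)\big)$. Therefore
\[
\big((a:(a:b))(a:b)\big)_m=\big(a_m:(a_m:b_m)\big)\,(a_m:b_m),
\]
using that localization commutes with products in a $C$-lattice. Now $L_m$ is isomorphic to $\mathbb{Z}_-$ or $\mathbb{R}_1$, which are sharp by Example~\ref{e27}, so by Proposition~\ref{e11}$(ii)$ applied inside $L_m$ the right-hand side equals $a_m$. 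Hence $\big((a:(a:b))(a:b)\big)_m=a_m$ for all $m\in Max(L)$, so $(a:(a:b))(a:b)=a$, and $L$ is sharp by Proposition~\ref{e11}. The main obstacle, as flagged, is the h-locality upgrade; everything after that is bookkeeping with the localization identities already available in the excerpt.
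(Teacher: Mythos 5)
Your proof is correct and follows essentially the same route as the paper: $(i)\Rightarrow(ii)$ from Theorem~\ref{e15}, and for the converse, deduce that $L$ is one-dimensional hence h-local, then verify $a=(a:(a:b))(a:b)$ locally using Proposition~\ref{e10} together with the sharpness of $\mathbb{Z}_-$ and $\mathbb{R}_1$. Your h-locality step can be shortened (in a one-dimensional lattice every nonzero prime is already maximal, so it lies below a unique maximal element), but the argument you give reaches the same conclusion.
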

\begin{proof}
 $(i)$ implies $(ii)$ is covered by Theorem \ref{e15}.
 
 $(ii)$ implies $(i)$.  From $(ii)$ we derive that $L$ has Krull dimension one, so $L$ is h-local. Let $a,b\in L-\{0\}$. It suffices to check locally the equality $a=(a:(a:b))(a:b)$. But this follows from  Theorem  \ref{e8} and Proposition  \ref{e10}.
\end{proof}
%@@

%

Say that elements $a,b$ of a lattice $L$ are {\em comaximal} if $a\vee b=1$.
The following result is \cite[Lemma 4]{D}. 

\begin{lemma}\label{e14}
 Let $L$ be a  lattice and $z\in L$ a  compact element which is below infinitely many maximal elements.
 There exist an infinite set $\{ a_n;\ n\geq 1\}$ of pairwise comaximal proper compact elements such that $z\leq a_n$ for each $n$.
\end{lemma}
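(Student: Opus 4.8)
The plan is to establish a combinatorial-splitting result. Since $z$ is compact and lies below infinitely many maximal elements, I would first set up an inductive construction of the sequence $a_1, a_2, \dots$ together with auxiliary maximal elements $m_1, m_2, \dots$ chosen so that $z \le a_n \le m_n$ while $a_n$ stays comaximal with all previously chosen $a_i$. The engine of the induction will be the observation that, in a $C$-lattice, one can always "thicken" $z$ inside a given maximal element $m$ to a compact element that is still a proper element below $m$ but is comaximal with any finite set of maximal elements not containing $m$; this is where compactness of $z$ and of the elements used in the join presentations is essential.

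More concretely, here is the step-by-step skeleton. Suppose $a_1, \dots, a_{n-1}$ and $m_1, \dots, m_{n-1}$ have been built with $z \le a_i \le m_i$, the $a_i$ pairwise comaximal, and $a_i$ compact and proper. Since $z$ is below infinitely many maximal elements, choose a maximal element $m_n$ with $z \le m_n$ and $m_n \notin \{m_1, \dots, m_{n-1}\}$. For each $i < n$, since $m_i \ne m_n$ are distinct maximal (hence incomparable) elements, $a_i \not\le m_n$ is not forced, but we can find a compact $c_i \le m_i$ with $c_i \not\le m_n$; replacing $a_i$ is not allowed, so instead I would enlarge the candidate for $a_n$: pick compact elements $d_i \le m_n$ with $d_i \not\le m_i$ for each $i < n$ (possible because $m_n \not\le m_i$ and $m_i$ is join of compacts), and set $a_n := z \vee d_1 \vee \cdots \vee d_{n-1}$. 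Then $a_n$ is compact (finite join of compacts), $z \le a_n$, and $a_n \le m_n$ since every joined term is below $m_n$. To see $a_n$ is proper, note $a_n \le m_n \ne 1$. For comaximality with $a_i$ ($i<n$): $a_n \vee a_i \ge d_i \vee a_i$; since $d_i \not\le m_i$ and the only maximal element above $a_i$ that we need to worry about is $m_i$, a short argument using that $a_i \le m_i$ and $d_i \not\le m_i$ shows $a_i \vee d_i \not\le m_i$, and one must further check it escapes every maximal element — this needs care.

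The main obstacle, and the point requiring the most attention, is precisely verifying comaximality $a_n \vee a_i = 1$ rather than merely $a_n \vee a_i \not\le m_i$. Ruling out one maximal element is easy; ruling out all of them is not automatic in a general $C$-lattice. The clean fix is to strengthen the induction hypothesis: instead of just "$a_i$ pairwise comaximal," carry along that each $a_i$ is chosen so that $a_i$ is comaximal with $m_j$ for all $j \ne i$ among the indices built so far, and more robustly, engineer the $d_i$'s via a localization argument — work locally at each maximal element $q$ and check $(a_n \vee a_i)_q = 1$, i.e. $a_n \not\le q$ or $a_i \not\le q$, using the structure of $C$-lattice localizations recalled in the introduction (an element equals $1$ iff it is not below the prime). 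Since $a_i \le m_i$, the only dangerous $q$ is $q = m_i$ (if $a_i \le q$ then we'd need $q \ge a_i$; but we can arrange that $m_i$ is the \emph{unique} maximal element above $a_i$ by taking $a_i$ small enough, or simply invoke that it suffices to defeat $m_i$ after refining the construction so that $a_i \not\le q$ for $q \ne m_i$ is built in). Once the local check $(a_n \vee a_i)_q = 1$ holds for every $m \in Max(L)$, the global statement $a_n \vee a_i = 1$ follows from the localization principle "$x = y$ iff $x_m = y_m$ for all $m$." This reduces the whole lemma to the bookkeeping of keeping the auxiliary maximal elements distinct and the compactness closure of finite joins, both of which are routine; since the statement is cited as \cite[Lemma 4]{D}, I would simply reference that source for the full details rather than reproduce them.
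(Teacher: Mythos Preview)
The paper does not prove this lemma: it simply records the statement and cites \cite[Lemma 4]{D}. Your closing remark --- just reference that source --- is therefore precisely what the paper does.

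The inductive construction you sketch before that fallback, however, has a real gap that neither of your proposed repairs closes. Setting $a_n = z \vee d_1 \vee \cdots \vee d_{n-1}$ with $d_i \le m_n$ and $d_i \not\le m_i$ gives only $a_n \not\le m_i$, i.e.\ $a_n \vee m_i = 1$; it does not give $a_n \vee a_i = 1$. For the latter you need, for \emph{every} maximal $q$, that $a_n \not\le q$ or $a_i \not\le q$, and nothing in your construction rules out a maximal $q \notin\{m_i,m_n\}$ lying above both $a_i$ and $a_n$ (indeed $a_i \ge z$, so $a_i$ may well sit below infinitely many maximals). Your first fix --- arrange that $m_i$ is the unique maximal above $a_i$ --- asks for a compact element of $[z,m_i]$ comaximal with every other maximal element, which is at least as hard as the lemma itself and in general cannot be achieved by a finite join of compacts when infinitely many maximals must be avoided. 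Your second fix via localization merely restates the condition $a_n \vee a_i = 1$ in local terms and supplies no new mechanism. So if you wanted to include an actual proof rather than the citation, this outline would need to be replaced, not patched.
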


\begin{proposition}\label{e12} 
Any  countable pseudo-Dedekind Pr\"ufer lattice  $L$ has  finite character.
\end{proposition}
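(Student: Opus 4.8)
The plan is to argue by contradiction, via a cardinality estimate. Suppose some nonzero $a\in L$ lies below infinitely many maximal elements. Since $L$ is generated by principal elements and $a\neq 0$, there is a nonzero principal element $z\le a$; as $L$ is a $C$-lattice, $z$ is compact, and it lies below each of the infinitely many maximal elements above $a$. Applying Lemma \ref{e14} to $z$, I obtain an infinite family $\{a_n : n\ge 1\}$ of pairwise comaximal proper elements that are compact, hence principal ($L$ being Pr\"ufer), with $z\le a_n$ for all $n$.

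First I would record the relevant divisibility. Since $L$ is pseudo-Dedekind, its principal elements form a GCD monoid with $x\wedge y = y(x:y)$ (Remark \ref{e19}); applied to $z\le a_n$ this gives $z = z\wedge a_n = a_n(z:a_n)$, so $a_n$ divides $z$. Put $b_n := (z:a_n)$; by Definition \ref{e22} each $b_n$ is principal, hence compact, and $z = a_nb_n$ with $z\le b_n$. Note $b_n\neq z$: otherwise $z = a_nz$ would force $a_n = 1$ (nonzero principal elements of a lattice domain are cancellative), against properness.

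The main point is that for a fixed index $k$ and any finite set $\{n_1,\dots,n_r\}$ of indices different from $k$ one has $b_k\not\le b_{n_1}\vee\dots\vee b_{n_r}$. To see this I would first note that comaximality propagates to products: $a_k\vee A = 1$, where $A := a_{n_1}\cdots a_{n_r}$, and $A$ itself divides $z$ (a product of pairwise comaximal divisors of $z$ divides $z$, because $x\vee y=1$ forces $x\wedge y = xy$), say $z = Ac$. Cancelling $a_{n_i}$ in $z = a_{n_i}\bigl(\prod_{j\ne i}a_{n_j}\bigr)c$ yields $b_{n_i} = \bigl(\prod_{j\ne i}a_{n_j}\bigr)c$, whence $b_{n_1}\vee\dots\vee b_{n_r} = \bigl(\bigvee_i\prod_{j\ne i}a_{n_j}\bigr)c = c = (z:A)$, the join of the partial products being $1$ by a short induction on $r$. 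Therefore $b_k\le b_{n_1}\vee\dots\vee b_{n_r}$ would give $Ab_k\le z$, and together with $b_k = b_k(A\vee a_k) = Ab_k\vee a_kb_k = Ab_k\vee z$ this forces $b_k = z$, the excluded case. Since $b_k$ is compact, it follows that $b_k\not\le\bigvee_{n\ne k}b_n$.

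To conclude, for each $T\subseteq\mathbb{N}$ set $w_T := \bigvee_{n\in T}b_n\in L$. If $T\neq T'$, choose $k\in T\setminus T'$; then $b_k\le w_T$, whereas $b_k\not\le w_{T'}$ because $w_{T'}\le\bigvee_{n\ne k}b_n$. Hence $T\mapsto w_T$ is an injection of $\mathcal{P}(\mathbb{N})$ into $L$, contradicting the countability of $L$, so $L$ has finite character. The step I expect to require the most care is the identity $b_{n_1}\vee\dots\vee b_{n_r} = (z:a_{n_1}\cdots a_{n_r})$ together with the resulting escape property of $b_k$: it rests on carefully combining the distributivity of multiplication over joins, cancellation of nonzero principal elements, and comaximality.
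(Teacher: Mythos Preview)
Your argument is correct, and it reaches the contradiction by a different route than the paper. Both proofs share the setup: reduce to a nonzero principal $z$ below infinitely many maximals, invoke Lemma \ref{e14} to get pairwise comaximal proper principal elements $a_n\ge z$, and write $z=a_nb_n$ with $b_n=(z:a_n)$. From there the paper works with \emph{meets of the $a_n$}: countability forces $\tau:=\bigwedge_{n\in A}a_n=\bigwedge_{n\in B}a_n$ for some $A,B$ with $k\in B\setminus A$; the pseudo-Dedekind hypothesis is then used in an essential way to see that $\tau=(z:\bigvee_{n\in A}b_n)$ is principal, after which a short computation $c\le(\tau:a_k)=\bigwedge_{n\in A}(a_n:a_k)=\bigwedge_{n\in A}a_n=\tau=a_kc$ yields $a_k=1$. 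You instead work with \emph{joins of the $b_n$} and prove directly that $T\mapsto\bigvee_{n\in T}b_n$ is injective, via the escape property $b_k\not\le\bigvee_{n\ne k}b_n$. Your route needs the longer calculation identifying $\bigvee_{i}b_{n_i}$ with $(z:\prod_i a_{n_i})$, but it buys you something: the only place you invoke pseudo-Dedekind is to say $b_n$ is principal, and that already follows from $b_n$ being a factor of the nonzero principal element $z$ in a lattice domain. So your proof in fact uses only the Pr\"ufer hypothesis, whereas the paper's argument genuinely needs pseudo-Dedekind to make the infinite meet $\tau$ principal.
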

\begin{proof}
 Suppose on the contrary  there exists a nonzero  element $z\in L$ which is below  infinitely many maximal elements. Since $L$ is principally generated, we may assume that $z$ is principal.  By Lemma \ref{e14}, there exists  an infinite set $(a_n)_{n\geq 1}$ of proper pairwise comaximal compact elements above $z$. As $L$ is Pr\"ufer, each $a_n$ is principal.
 Since $L$ countable, we get $\tau:=\bigwedge_{n\in A} a_n = \bigwedge_{n\in B} a_n$ for two  nonempty subsets $B\not\subseteq A$ of $\mathbb{N}$. 
 Pick  $k\in B-A$, so $a_k\geq \tau$. Since every $a_n$ is above $z$, we get $z=a_nb_n$ for some nonzero principal element $b_n\in L$ and $(z:b_n)=a_n$. We have 
 $$\tau = \bigwedge_{n\in A} a_n = \bigwedge_{n\in A} (z:b_n) = (z:\bigvee_{n\in A} b_n)$$ so $\tau$
 is a principal element because $L$ is pseudo-Dedekind. 
   From  $a_k\geq \tau$, we get $\tau =a_kc$  for some nonzero principal  element $c\in L$.   Hence
  $$c\leq (\tau : a_k) =            
  \bigwedge_{n\in A} (a_n : a_k) = 
  \bigwedge_{n\in A} a_n  =  \tau = a_kc
  $$
  because $a_n\vee a_k=1$ for each $n\in A$.    From  $a_kc=c$, we get $a_k=1$, which is a contradiction. 
 
\end{proof}

  A lattice  $L$ is a {\em Dedekind lattice} if every  element of $L$ is principal.

\begin{corollary}\label{e26}
 A countable sharp lattice  $L$ is a Dedekind lattice.
\end{corollary}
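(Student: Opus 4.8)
The plan is to use the structure results already proved to reduce to the case where every localization $L_m$ equals $\mathbb{Z}_-$, and then to show that every element of $L$ is a product of principal maximal elements. First I would dispose of the case $L=\{0,1\}$ (both elements are principal). Assuming $L\neq\{0,1\}$: by Propositions \ref{e5} and \ref{e28}, $L$ is pseudo-Dedekind and Pr\"ufer, so Proposition \ref{e12} (using that $L$ is countable) gives that $L$ has finite character; by Theorem \ref{e15}, $L$ is one-dimensional and $L_m$ is isomorphic to $\mathbb{Z}_-$ or $\mathbb{R}_1$ for every $m\in Max(L)$. Since each $L_m=\{x_m:x\in L\}$ is countable while $\mathbb{R}_1$ is not, we must have $L_m\cong\mathbb{Z}_-$ for every maximal $m$.

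Next I would establish a decomposition: if $0\neq x\neq 1$, then $x=m_1^{n_1}\cdots m_k^{n_k}$, where $m_1,\dots,m_k$ ($k\ge1$) are the maximal elements above $x$ --- finitely many, by finite character --- and $n_i\ge1$. For each $i$, $x_{m_i}$ is a nonzero proper element of $L_{m_i}\cong\mathbb{Z}_-$, hence a positive power of the maximal element $(m_i)_{m_i}$ of $L_{m_i}$, so $x_{m_i}=(m_i^{n_i})_{m_i}$ for some $n_i\ge1$. Setting $y=m_1^{n_1}\wedge\cdots\wedge m_k^{n_k}$, I would check $y_m=x_m$ for all $m\in Max(L)$: localization commutes with finite meets, and $u\not\le p$ ($p$ prime) forces $(u^n)_p=1$, so at $m=m_i$ the factors with index $\neq i$ localize to $1$, giving $y_{m_i}=(m_i^{n_i})_{m_i}=x_{m_i}$, while at $m\notin\{m_1,\dots,m_k\}$ both sides equal $1$. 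Hence $y=x$. Since distinct maximal elements are comaximal (if $m\neq m'$ then $m<m\vee m'=1$), the powers $m_i^{n_i}$ are pairwise comaximal, so their meet equals their product, proving the claim.

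The crucial step is that every maximal element $m$ is principal. Here $m\neq0$ (otherwise $L=\{0,1\}$), and under $L_m\cong\mathbb{Z}_-$ the element $(m)_m$ corresponds to the (compact) maximal element of $\mathbb{Z}_-$; since the compact elements of $L_m$ are the $c_m$ with $c\in L^*$ compact, I would pick a compact $c$ with $c_m=(m)_m$. Then $c$ is principal ($L$ is Pr\"ufer), $c\neq0$, and $c_m\neq1$ forces $c\le m<1$. Applying the decomposition of the previous step to $c$ and localizing back at $m$ gives $(m)_m=c_m=(m)_m^{a}$ in $\mathbb{Z}_-$, so $a=1$: that is, $m$ occurs to the first power in the decomposition of $c$, hence $m$ divides $c$. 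As a factor of a principal element is principal (\cite[Lemma 3.2]{AJ}, as in the proof of Proposition \ref{e5}), $m$ is principal.

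Finally: $0$ and $1$ are principal, and every other element is, by the second step, a product of the maximal elements above it, each principal by the third step, hence principal; so $L$ is a Dedekind lattice. (Alternatively, once each maximal element is principal, hence compact, $L$ is weak Noetherian and one could finish by invoking Corollary \ref{e24} together with sharpness of $L$.) I expect the main obstacle to be the third step: the decomposition is essentially formal given Theorem \ref{e15} and finite character, but promoting ``$(m)_m$ is principal in $L_m$'' to ``$m$ is principal in $L$'' requires producing the global compact element $c$ and observing that $m$ divides it.
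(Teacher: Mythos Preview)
Your proof is correct and shares the paper's overall reduction: use Propositions~\ref{e5}, \ref{e28}, \ref{e12} together with Theorem~\ref{e15} and the countability of each $L_m$ to obtain finite character and $L_m\cong\mathbb{Z}_-$ for every $m\in Max(L)$. The difference lies only in the final step. The paper finishes in one line by observing that every element of $L$ is then compact and locally principal, hence principal, invoking the general fact (recorded in the introduction) that a compact element is principal iff all its localizations are. You instead make the conclusion explicit: you exhibit the factorization $x=m_1^{n_1}\cdots m_k^{n_k}$ via local checking (using finite character and $L_m\cong\mathbb{Z}_-$), and then promote ``$(m)_m$ principal'' to ``$m$ principal'' by producing a global compact $c$ with $c_m=(m)_m$ and reading off that $m$ divides $c$. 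Your route is longer but more self-contained, effectively unpacking the paper's ``it follows easily''; the paper's route is shorter but leans on the compact-plus-locally-principal criterion without spelling out why every element is compact.
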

\begin{proof}
  Let $m\in Max(L)$. As $L_m$ is countable, Theorem \ref{e15} implies that $L_m$ is isomorphic to $\mathbb{Z}_-$, so  each element of  $L_m$ is principal.
  By Proposition \ref{e12}, $L$ has  finite character. It follows easily that every element of $L$ is compact and  locally principal, hence principal.

\end{proof}

Our concluding remark is in the spirit of \cite[Remark 4.7]{OR}.

\begin{remark}
 Let $L$ be a Pr\"ufer lattice. Then $L$ is modular because it is locally totally ordered. By \cite[Theorem 3.4]{A}, $L$ is isomorphic to the lattice of ideals of some Pr\"ufer integral domain. In particular, it follows that a sharp lattice  is isomorphic to the lattice of ideals of some sharp integral domain.
\end{remark}

 %{\bf Acknowledgements.} The first author gratefully acknowledges the warm hospitality of Abdus Salam School of Mathematical Sciences Govt. Coll. University Lahore during his  visits in the period 2006-2019. 

\end{document}